\newcommand{\bydef}{:=}
\newcommand{\defby}{=:}
\newcommand{\bi}{\boldsymbol{i}}
\newcommand{\wh}[1]{\widehat{#1}}
\newcommand{\wt}[1]{\widetilde{#1}}
\newcommand{\wb}[1]{\overline{#1}}
\newcommand{\ve}{\varepsilon}
\newcommand{\vp}{\varphi}
\newcommand{\ld}{\ldots}
\newcommand{\diag}{\mathrm{diag}}
\newcommand{\sgn}{\mathrm{sgn}}
\DeclareMathOperator*{\ot}{\otimes}%allows placement of subscript below in displaymath
\newcommand{\id}{\mathrm{id}}%identity map
\newcommand{\cA}{\mathcal{A}}%algebras
\newcommand{\cB}{{\mathcal B}}
\newcommand{\cC}{\mathcal{C}}
\newcommand{\cD}{\mathcal{D}}
\newcommand{\cE}{\mathcal{E}}
\newcommand{\cQ}{\mathcal{Q}}
\newcommand{\cU}{\mathcal{U}}
\newcommand{\cV}{\mathcal{V}}
\newcommand{\cY}{\mathcal{Y}}
\newcommand{\NN}{\mathbb{N}}
\newcommand{\ZZ}{\mathbb{Z}}
\newcommand{\QQ}{\mathbb{Q}}
\newcommand{\RR}{\mathbb{R}}
\newcommand{\KK}{\mathbb{K}}
\newcommand{\LL}{\mathbb{L}}
\newcommand{\CC}{\mathbb{C}}
\newcommand{\HH}{\mathbb{H}}
\newcommand{\FF}{\mathbb{F}}
\newcommand{\chr}[1]{\mathrm{char}\,#1}
\DeclareMathOperator{\rad}{\mathrm{rad}}
\DeclareMathOperator{\Hom}{\mathrm{Hom}}
\DeclareMathOperator{\End}{\mathrm{End}}
\DeclareMathOperator{\Gal}{\mathrm{Gal}}
\DeclareMathOperator{\Ext}{\mathrm{Ext}}
\DeclareMathOperator{\Tor}{\mathrm{Tor}}
\DeclareMathOperator{\Aut}{\mathrm{Aut}}%automorphism group
\DeclareMathOperator{\Int}{\mathrm{Int}}%inner automorphism
\DeclareMathOperator{\supp}{\mathrm{Supp}\,}
\DeclareMathOperator{\Supp}{\mathrm{Supp}}
\newtheorem{theorem}{Theorem}[section]
\newtheorem{lemma}[theorem]{Lemma}
\newtheorem{corollary}[theorem]{Corollary}
\newtheorem{proposition}[theorem]{Proposition}
\newtheorem{example}[theorem]{Example}
\newtheorem{remark}[theorem]{Remark}
\newtheorem{definition}[theorem]{Definition}
\begin{document}
\title{Graded-division algebras over arbitrary fields}
\author{Yuri Bahturin}
\address{Department of Mathematics and Statistics, Memorial University of Newfoundland, St. John's, NL, A1C5S7, Canada}
\email{bahturin@mun.ca}
\author{Alberto Elduque}
\address{Departamento de Matem\'{a}ticas e Instituto Universitario de Matem\'aticas y Aplicaciones, Universidad de Zaragoza, 50009 Zaragoza, Spain}
\email{elduque@unizar.es}
\author{Mikhail Kochetov}
\address{Department of Mathematics and Statistics, Memorial University of Newfoundland, St. John's, NL, A1C5S7, Canada}
\email{mikhail@mun.ca}

\subjclass[2010]{Primary 16W50; Secondary 16K20}
\keywords{Graded algebra, division algebra, graded-division algebra, classification, field extension, Galois descent}
\thanks{The first author acknowledges support by Discovery Grant 227060-14 of the Natural Sciences and Engineering Research Council (NSERC) of Canada. 
The second author was supported by grants MTM2017-83506-C2-1-P (AEI/FEDER, UE) and E22\_17R (Gobierno de Arag\'on, Grupo de referencia ``Algebra y Geometr\'ia'', cofunded by Feder 2014-2020 ``Construyendo Europa desde Arag\'on''). 
The third author acknowledges support by NSERC Discovery Grant 2018-04883.}

\begin{abstract}
A graded-division algebra is an algebra graded by a group such that all nonzero homogeneous elements are invertible. This includes division algebras equipped with an arbitrary group grading (including the trivial grading). 
We show that a classification of finite-dimensional graded-central graded-division algebras over an arbitrary field $\FF$ can be reduced to the following three classifications, for each finite Galois extension $\LL$ of $\FF$: (1) finite-dimensional central division algebras over $\LL$, up to isomorphism; 
(2) twisted group algebras of finite groups over $\LL$, up to graded-isomorphism; (3) $\FF$-forms of certain graded matrix algebras with coefficients in $\Delta\otimes_\LL\cC$ where $\Delta$ is as in (1) and $\cC$ is as in (2).
As an application, we classify, up to graded-isomorphism, the finite-dimensional graded-division algebras over the field of real numbers (or any real closed field) with an abelian grading group. We also discuss group gradings on fields.
\end{abstract}

\maketitle

\section{Introduction}

Let $\cA$ be an algebra over a commutative ring $\FF$ and let $G$ be a group. We will write the operation of $G$ as multiplication and denote its identity element by $e$.
A \emph{$G$-grading} on $\cA$ is an $\FF$-module decomposition $\cA=\bigoplus_{g\in G}\cA_g$ such that $\cA_g\cA_h\subset\cA_{gh}$ for all $g,h\in G$. 
If such a decomposition is fixed, $\cA$ is said to be a \emph{$G$-graded algebra}. The nonzero elements of $\cA_g$ are said to be \emph{homogeneous of degree $g$}. 
An $\FF$-submodule (in particular, a subalgebra or an ideal) $\cU\subset\cA$ is \emph{graded} if $\cU=\bigoplus_{g\in G}\cU_g$ where $\cU_g\bydef \cU\cap\cA_g$. 
In this case, the \emph{support} of $\cU$ is the set $\Supp(\cU)\bydef\{g\in G\mid \cU_g\ne 0\}$. For any subset $H\subset G$, we define a graded submodule $\cA_H\bydef\bigoplus_{g\in H}\cA_g$, with $\Supp(\cA_H)\subset H$.
If $H$ is a subgroup of $G$, then $\cA_H$ is a graded subalgebra of $\cA$.
In this paper, we will deal exclusively with associative algebras over a field $\FF$. 

Many concepts and classical results of ring theory have their $G$-graded analogues. For example, a \emph{homomorphism of $G$-graded algebras} $\cA\to\cB$ is an algebra homomorphism that maps $\cA_g$ to $\cB_g$, for all $g\in G$. In particular, this gives the notion of \emph{isomorphism of graded algebras}, or \emph{graded-isomorphism} for short.
Given a $G$-graded algebra $\cA$, a \emph{graded left $\cA$-module} is a left $\cA$-module $\cV$ with a vector space decomposition $\cV=\bigoplus_{g\in G}\cV_g$ such that $\cA_g\cV_h\subset\cV_{gh}$ for all $g,h\in G$. 
$\cA$ is said to be \emph{graded-simple} if $\cA^2\ne 0$ and $\cA$ has no graded ideals except $0$ and $\cA$. In particular, this holds if $\cA$ is a \emph{graded-division algebra}, i.e., a unital $G$-graded algebra in which every nonzero homogeneous element is invertible. 
It is easy to see that every graded module over a graded-division algebra is free: more precisely, it admits a basis consisting of homogeneous elements.
The importance of graded-division algebras can be illustrated by the following graded analogue of a classical result of Wedderburn: $\cA$ is graded-simple and satisfies the descending chain condition on graded left ideals if and only if there exists a $G$-graded algebra $\cD$ and a graded right $\cD$-module $\cV$ such that $\cD$ is a graded-division algebra, $\cV$ has finite rank, and $\cA\simeq \End_{\cD}(\cV)$ as a $G$-graded algebra (see \cite[Theorem 2.6]{EK}). Here the $G$-grading on $\End_\cD(\cV)$ is defined by declaring an operator $r$ to be homogeneous of degree $g$ if $r(\cV_h)\subset\cV_{gh}$, for all $h\in G$. For a given $\cA$, the pair $(\cD,\cV)$ is unique up to isomorphism and --- a new feature that appears in the graded setting --- shift of grading: $\cV$ may be replaced by $\cV^{[g]}$, for any $g\in G$, where $\cV^{[g]}$ is $\cV$ as a vector space, but with each $\cV_h$ now declared to be the homogeneous component of degree $hg$ and the right $\cD$-module structure of $\cV$ now making $\cV^{[g]}$ a graded right ${}^{[g^{-1}]}\cD^{[g]}$-module (see \cite[Section 2.1]{EK}).

Graded-division algebras were studied in \cite{Kar} in terms of group extensions. Indeed, if $\cD$ is a graded-division algebra, then $T\bydef \Supp(\cD)$ is a subgroup of $G$, $\cD_e$ is a division algebra, and we have the following short exact sequence of groups:
\begin{equation}\label{eq:short_exact}
1\to\cD_e^\times\to\cD_{\mathrm{gr}}^\times\to T\to 1,
\end{equation}
where $\cD_e^\times$ is the group of nonzero elements of $\cD_e$, $\cD_{\mathrm{gr}}^\times$ is the group of nonzero homogeneous elements of $\cD$, the map $\cD_e^\times\to\cD_{\mathrm{gr}}^\times$ is the inclusion, and the map $\cD_{\mathrm{gr}}^\times\to T$ sends a homogeneous element to its degree. Thus, we can assign to each graded-division algebra with support $T$ a group extension of the form 
\[
1\to\Delta^\times\to E\to T\to 1,
\] 
where $\Delta$ is a division algebra and the image of the map $T\to\mathrm{Out}(\Delta^\times)$ is contained in the subgroup $\mathrm{Out}(\Delta)$. (Note that the group $\Delta^\times$ and the algebra $\Delta$ have the same inner automorphisms, also $Z(\Delta^\times)=Z(\Delta)^\times$.)
It is shown in \cite{Kar} that the above assignment is an equivalence of categories if we take as morphisms, on the  one hand, homomorphisms of graded algebras and, one the other hand, homomorphisms of group extensions $(\alpha,\beta,\gamma)$ where $\gamma=\id_T$ and $\alpha$ is the restriction of an algebra map. In principle, this yields a classification of graded-division algebras $\cD$ with $\Supp(\cD)=T$ and $\cD_e\simeq \Delta$ in terms of homomorphisms $\rho:T\to\mathrm{Out}(\Delta)$ and group cohomology: $\rho$ must be such that the corresponding obstruction in $H^3(T,Z(\Delta^\times))$ vanishes, and then the isomorphism classes of $\cD$ with a fixed $\rho$ (up to conjugation in $\mathrm{Out}(\Delta)$) are in bijection with the orbits in $H^2(T,Z(\Delta^\times))$ under a certain twisted action of $\Aut(\Delta,\rho)$ (see \cite{Kar} for details). It should be noted, however, that even if the computation of the relevant cohomology groups and orbits therein can be made, this still does not give us an easy way to construct the corresponding graded-division algebras and study their properties (for example, determine which of them are simple as algebras). Graded-division algebras graded by the group $\ZZ$ were considered in \cite{GO}.

In the special case where $\FF$ is algebraically closed and $\cD$ is finite-dimensional, the situation simplifies: $|T|<\infty$ and $\cD_e=\FF$, so $\cD$ is graded-isomorphic to the twisted group algebra $\FF^\tau T$ for some $2$-cocycle $\tau:T\times T\to \FF^\times$ (with $T$ acting trivially on $\FF^\times$), and $\FF^{\tau} T$ is graded-isomorphic to $\FF^{\tau'} T$ if and only if $[\tau]=[\tau']$ in $H^2(T,\FF^\times)$. 

In this paper we want to study finite-dimensional graded-division algebras over an arbitrary field $\FF$ and show how the general case can be reduced to the case of a twisted group algebra, i.e., $\cD_e=\FF$, by means of Galois descent (see Section \ref{se:gen}). This latter case has been studied, for example, in relation to projective representations of finite groups. Here we will limit our consideration of this case to abelian groups, for which all twisted group algebras can be explicitly classified up to isomorphism of graded algebras (Section \ref{se:1d}). As an application, we give an explicit classification up to isomorphism of the finite-dimensional graded-division algebras with abelian support in the case $\FF=\RR$ (Section \ref{se:real}). This classification was previously known assuming $\cD$ is simple as an algebra \cite{R}. One can also classify graded-division algebras up to \emph{equivalence}, i.e., an isomorphism of algebras $\cD\to\cD'$ that maps $\cD_t$ to $\cD'_{\gamma(t)}$ where $\gamma:T\to T'$ is a group isomorphism. In the case $\FF=\RR$, such a classification of finite-dimensional graded-simple algebras was obtained in \cite{BZ18} (see also \cite{BZ16,R} for the case of $\cD$ simple as an algebra). It should be noted that, for an abelian group $G$, the (cocycle twisted) loop algebra construction in \cite{Eld} (see also \cite{ABFP,BK,BSZ}) can be used to reduce the classification of $G$-graded algebras that are graded-central and graded-simple (resp., graded-division) to that of gradings by quotients of $G$ on central simple (resp., division) algebras.
 
Another interesting question is the following: what division algebras over $\FF$ admit nontrivial gradings and what are the possible supports? Clearly, any grading on a division algebra yields a graded-division algebra, but in general it is difficult to determine which graded-division algebras are in fact division algebras. In Section~\ref{se:fields}, we will discuss this question in the commutative setting, i.e., gradings on field extensions of $\FF$.

\section{General case}\label{se:gen}

In this section we discuss general finite-dimensional graded-division algebras over an arbitrary field and suggest a method to reduce their classification to some more particular problems. Let 
\begin{equation}\label{GSe1}
 \cD=\bigoplus_{t\in T}\cD_t
\end{equation}
 be such a graded algebra over a field $\FF$, where $T\bydef\supp \cD$ is a finite group. Note that, unless $T$ is abelian, the center $Z(\cD)$ is not necessarily a graded subalgebra of $\cD$, but we can still define $Z(\cD)_e\bydef Z(\cD)\cap\cD_e$. It is a subfield of the finite-dimensional division algebra $\cD_e$ and it contains $\FF$. If $Z(\cD)_e=\FF$ then $\cD$ is said to be \emph{graded-central}. In any case, $\cD$ is an algebra over $Z(\cD)_e$ and the decomposition \eqref{GSe1} is a grading of $\cD$ as a $Z(\cD)_e$-algebra. Therefore, it is natural to view $Z(\cD)_e$ as the ground field. From now on, we assume that $\cD$ is a graded-central. We will also fix nonzero elements $X_t\in\cD_t$ for all $t\in T$. Then $\cD_t=\cD_e X_t=X_t\cD_e$ (so $\cD$ can be regarded as a crossed product). 

Now let us consider the center $\LL=Z(\cD_e)$ of the division algebra $\cD_e$. This is a finite field extension of $\FF$. In the following lemma we will show that $\LL$ is always  a Galois extension of $\FF$ and determine its Galois group. For this, we will need the centralizer of $\LL$ in $\cD$:
\[
\mathrm{Cent}_\cD(\LL)\bydef\{ x\in \cD\mid xy=yx\text{ for all }y\in \LL\}.
\]  
Since $\LL\subset\cD_e$, $\mathrm{Cent}_\cD(\LL)$ is a graded subalgebra of $\cD$. We denote by $K$ the support of this subalgebra. Since the elements of $\cD_e$ centralize $\LL$, we have
\[
\mathrm{Cent}_\cD(\LL)=\bigoplus_{t\in K}\cD_t\defby\cD_K.
\]

\begin{lemma}\label{GSl1} 
The center $\LL=Z(\cD_e)$ of the identity component of $\cD$ is a Galois extension of the base field $\FF$, and its Galois group $\Gal(\LL/\FF)$ is isomorphic to $T/K$ where $K=\Supp\mathrm{Cent}_\cD(\LL)$ is a normal subgroup of $T$. 
\end{lemma}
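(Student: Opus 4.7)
My plan is to produce the Galois group $T/K$ as the image of a natural action of $T$ on $\LL$ by $\FF$-algebra automorphisms, and then apply Artin's theorem.

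First, I would observe that for each $t\in T$ and each nonzero $X_t\in\cD_t$, conjugation $a\mapsto X_t a X_t^{-1}$ is an $\FF$-algebra automorphism of $\cD$ that stabilizes $\cD_e$ (since $\cD_t=\cD_e X_t=X_t\cD_e$) and hence restricts to an automorphism of $Z(\cD_e)=\LL$. A different choice $X_t'\in\cD_t\setminus\{0\}$ differs from $X_t$ by a left multiplication by an element of $\cD_e^\times$, and conjugation by such an element is an inner automorphism of $\cD_e$, which acts trivially on $\LL=Z(\cD_e)$. Thus there is a well-defined map $\sigma:T\to\Aut_\FF(\LL)$, $t\mapsto\sigma_t$, and since $X_sX_t$ lies in $\cD_{st}=\cD_e X_{st}$, $\sigma$ is a group homomorphism.

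Second, I would identify $\ker\sigma$. By definition, $\sigma_t=\id_\LL$ iff some (equivalently, every) nonzero $X_t\in\cD_t$ commutes with every element of $\LL$, i.e., iff $\cD_t\subset\Cent_\cD(\LL)=\cD_K$. Hence $\ker\sigma=K$, which is therefore a normal subgroup of $T$, and $\sigma$ induces an injection $\bar\sigma:T/K\hookrightarrow\Aut_\FF(\LL)$.

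Third, I would compute the fixed subfield of $\bar\sigma(T/K)$ in $\LL$. An element $a\in\LL$ is fixed by every $\sigma_t$ iff $a$ commutes with every $X_t$, $t\in T$. Since $a\in\LL\subset Z(\cD_e)$ also commutes with every element of $\cD_e$, and since $\cD=\sum_{t\in T}\cD_e X_t$, such an $a$ lies in $Z(\cD)\cap\cD_e=Z(\cD)_e$, which equals $\FF$ by the graded-centrality assumption. Conversely, $\FF$ is pointwise fixed. Thus $\LL^{\bar\sigma(T/K)}=\FF$.

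Finally, $T/K$ is a finite group of $\FF$-automorphisms of $\LL$ with fixed field $\FF$, so by Artin's theorem $\LL/\FF$ is Galois with $\Gal(\LL/\FF)\simeq T/K$ via $\bar\sigma$. The only mildly delicate point is making sure in the first step that the conjugation action is genuinely well defined modulo choice of representative, which the containment of inner automorphisms of $\cD_e$ in the kernel of the restriction to $\LL$ handles; everything else is a direct verification.
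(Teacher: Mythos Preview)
Your proof is correct and follows essentially the same route as the paper: define the homomorphism $T\to\Aut_\FF(\LL)$ via conjugation by nonzero homogeneous elements, identify its kernel as $K$, check that the fixed subfield is $\FF$ using graded-centrality, and conclude by Artin's theorem. Your write-up is slightly more explicit about why the map is independent of the choice of $X_t$, but otherwise the arguments coincide.
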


\begin{proof} 
For any $t\in T$, the inner automorphism $\Int X_t$ of $\cD$, $y\mapsto X_tyX_t^{-1}$, maps $\cD_e$ onto itself, so induces an automorphism of $\cD_e$. Being the center of $\cD_e$, $\LL$ is stable under all automorphisms of $\cD_e$. As a result, $\Int X_t$ restricts to an automorphism $(\Int X_t)|_\LL$ of $\LL$ over $\FF$. Clearly, $(\Int X_t)|_\LL$ does not depend on the choice of $X_t$ in $\cD_t$. Therefore, the map $t\mapsto (\Int X_t)|_\LL$ is a well-defined homomorphism $T\to \Aut_\FF(\LL)$. Now $(\Int X_t)|_\LL=\id_\LL$ if and only if $\cD_t$ centralizes $\LL$, so the kernel of this homomorphism is $K$. Also, the fixed points in $\LL$ under this action of $T$ are precisely the elements of $\FF=Z(\cD)_e$. By Artin's theorem \cite[Theorem 2 in Section VIII.1]{L}, $\LL$ is a Galois extension of $\FF$ and $T/K$ is isomorphic to $\Gal (\LL/\FF)$.
\end{proof}

In what follows, we are primarily interested in the case $\LL=Z(\cD_e)$, but our results hold in a slightly more general setting: $\LL$ can be any Galois extension of $\FF$ contained in $Z(\cD_e)$. It follows from Lemma \ref{GSl1} that the Galois group $\Gal(\LL/\FF)$ is still isomorphic to $T/K$ where $K=\Supp\mathrm{Cent}_\cD(\LL)$. 

Now let us extend the scalars from $\FF$ to $\LL$, i.e., consider the $\LL$-algebra $\wt{\cD}=\cD\ot_\FF \LL$ with the natural grading $\wt{\cD}=\bigoplus_{t\in T}\wt{\cD}_t$ where $\wt{\cD}_t\bydef \cD_t\otimes_\FF \LL$. 

It is clear that the property of being graded-central is invariant under scalar extensions. Indeed, let $\cA$ be a unital $G$-graded algebra over a field $\FF$ and let $\KK$ be any field extension of $\FF$. Then $Z(\cA\ot_\FF \KK)=Z(\cA)\ot_\FF \KK$ and, hence, $Z(\cA\ot_\FF \KK)_e=Z(\cA)_e\ot_\FF \KK$, which shows that $\cA$ is graded-central if and only if so is $\cA\ot_\FF \KK$. (This is valid without assuming $\cA$ unital or associative if we replace the center by centroid.) As is well known from the case $G=\{e\}$, being graded-simple or graded-division is not preserved under scalar extensions in general. Following \cite{ABFP}, we will consider the property of being graded-central and graded-simple at the same time, which we refer to as \emph{graded-central-simple}.

\begin{lemma}
The property of being graded-central-simple is invariant under scalar extensions.
\end{lemma}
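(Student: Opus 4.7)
Since graded-centrality is preserved under scalar extensions (as noted in the preceding paragraph), the task is to show that if $\cA$ is graded-central-simple over $\FF$ and $\KK/\FF$ is any field extension, then the graded $\KK$-algebra $\wt\cA := \cA\otimes_\FF\KK$ has no nonzero proper graded ideals. The plan is to carry out a graded version of the classical minimal-length argument that proves central simple algebras remain simple under scalar extension, with the role of $Z(\cA)=\FF$ in the classical case now played by $Z(\cA)_e=\FF$.

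Fix an $\FF$-basis $\{\lambda_\alpha\}_{\alpha\in\Lambda}$ of $\KK$, so that every element of $\wt\cA$ has a unique expression $\sum_\alpha a_\alpha\otimes\lambda_\alpha$ with $a_\alpha\in\cA$; call the number of nonzero $a_\alpha$ its length. Given a nonzero graded ideal $I\subset\wt\cA$, choose a nonzero homogeneous element $x=\sum_{i=1}^n a_i\otimes\lambda_{\alpha_i}\in I_g$ of minimal length, with $a_i\in\cA_g\setminus\{0\}$. Regard $\cA$ as a graded module over its enveloping algebra $\cA^e=\cA\otimes_\FF\cA^{\mathrm{op}}$ via $(b\otimes c^{\mathrm{op}})\cdot a=bac$. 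The key observation: if $y\in\cA^e$ is bi-homogeneous and $y\cdot a_1=0$, then $y\cdot x=\sum_i(y\cdot a_i)\otimes\lambda_{\alpha_i}$ is a homogeneous element of $I$ of length strictly less than $n$, so it vanishes by minimality, giving $y\cdot a_i=0$ for every $i$. Since $a_1$ is homogeneous its annihilator in $\cA^e$ is bi-graded, and decomposing arbitrary $y\in\cA^e$ into bi-homogeneous components upgrades the previous statement to $\mathrm{Ann}_{\cA^e}(a_1)\subseteq\mathrm{Ann}_{\cA^e}(a_i)$ for every $i$.

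Both parts of the hypothesis now intervene. Graded-simplicity of $\cA$ yields $\cA^e\cdot a_1=\cA a_1\cA=\cA$, so for each $i$ there is a well-defined $\cA^e$-linear map $\varphi_i\colon\cA\to\cA$ with $\varphi_i(a_1)=a_i$, which is automatically graded of degree $e$ since $a_1,a_i\in\cA_g$. Any such $\varphi$ is determined by $\varphi(1)\in Z(\cA)\cap\cA_e=Z(\cA)_e$, and by graded-centrality this equals $\FF$; hence $a_i=\mu_i a_1$ with $\mu_i\in\FF$. Setting $\lambda:=\sum_i\mu_i\lambda_{\alpha_i}\in\KK^\times$ (nonzero because $\mu_1=1$ and the $\lambda_{\alpha_i}$ are $\FF$-linearly independent), we obtain $x=a_1\otimes\lambda$, so $a_1\otimes 1\in I$, and a final application of graded-simplicity gives $\wt\cA=\wt\cA(a_1\otimes 1)\wt\cA=(\cA a_1\cA)\otimes_\FF\KK\subseteq I$. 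The only delicate step is the bi-homogeneity reduction in the annihilator argument; everything else is a direct graded transcription of the classical proof.
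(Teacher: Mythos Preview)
Your argument is essentially correct and takes a genuinely different route from the paper. The paper's proof reinterprets a $G$-graded algebra as an algebra with one binary operation and a family of unary operations $p_g$ (the projections onto homogeneous components); under this reinterpretation $Z(\cA)_e$ becomes the centroid, graded ideals become ordinary ideals, and the scalar extension respects the enriched structure, so the lemma reduces to the classical fact that central-simplicity (for an arbitrary multilinear algebraic structure) is preserved under base-field extension. Your approach instead transplants the minimal-length argument directly into the graded setting. The paper's proof is slicker and conceptual but presupposes the general central-simple machinery for $\Omega$-algebras; yours is self-contained and makes explicit exactly where graded-simplicity and the condition $Z(\cA)_e=\FF$ are each used.

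One step needs repair, namely the one you flag as delicate. The assertion ``since $a_1$ is homogeneous its annihilator in $\cA^e$ is bi-graded'' is false. For instance, take $\cA=M_2(\FF)$ with the checkerboard $\ZZ_2$-grading and $a_1=E_{12}+E_{21}\in\cA_{\bar 1}$; then $y=E_{11}\otimes E_{22}^{\mathrm{op}}-E_{12}\otimes E_{12}^{\mathrm{op}}$ annihilates $a_1$, yet each bi-homogeneous component sends $a_1$ to $E_{12}\ne 0$. The fix is to bypass the $G\times G$-grading on $\cA^e$ and use instead the $G$-grading on $I$: for arbitrary $y\in\cA^e$ with $y\cdot a_1=0$, the element $y\cdot x=\sum_i(y\cdot a_i)\otimes\lambda_{\alpha_i}$ lies in $I$, hence so does its degree-$l$ component $\sum_i p_l(y\cdot a_i)\otimes\lambda_{\alpha_i}$ for every $l\in G$. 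The $i=1$ term vanishes, so this component has length $<n$ and is therefore zero by minimality; since this holds for all $l$, we get $y\cdot a_i=0$. With this correction the remainder of your proof goes through unchanged.
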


\begin{proof}
Let $\cA$ be a unital $G$-graded algebra over a field $\FF$ and let $\KK$ be any field extension of $\FF$. We can consider $\cA$ as an algebra with one binary operation (multiplication) and a family of unary operations $\{p_g\mid g\in G\}$ where $p_g:\cA\to\cA$ sends each element of $\cA$ to its component of degree $g$. Then $Z(\cA)_e$ can be interpreted as the centroid of this expanded algebra structure. Moreover, the ideals of this structure are precisely the graded ideals of $\cA$, and its scalar extension from $\FF$ to $\KK$ gives precisely the algebra  $\cA\ot_\FF \KK$ with its natural grading. The result follows.
\end{proof}

Therefore, our graded algebra $\wt{\cD}$ is graded-central-simple. However, it is not a graded-division algebra unless $\LL=\FF$. Indeed, write $\Gal(\LL/\FF)=\{\sigma_1,\sigma_2\ld,\sigma_k\}$, where $k=|\LL:\FF]=[T:K]$ and $\sigma_1=\id$. It is well known that the mapping $\lambda\ot\mu\mapsto (\sigma_1(\lambda)\mu,\ld,\sigma_k(\lambda)\mu)$, for all $\lambda,\mu\in \LL$,  extends to an isomorphism 
\[
\LL\ot_\FF \LL\,
\stackrel{\sim}{\longrightarrow}\,\underbrace{\LL_1\times \LL_2\times\cdots\times \LL_k}_{k\text{ \tiny{copies of} }\LL}. 
\]
%A proof can be found, say, in {\it http://www.math.uconn.edu/~kconrad/blurbs/galoistheory/galoisdescent.pdf}.
Consequently,  
\[
\wt{\cD}_e=\cD_e\ot_\FF \LL=(\cD_e\ot_\LL)\ot_\FF \LL=\cD_e\ot_\LL(\LL\ot_\FF \LL)\simeq \Delta_1\times\Delta_2\times\cdots\times\Delta_k,
\]
where $\Delta_i\bydef\cD_e\ot_\LL \LL_i$ are the \emph{Galois twists} of $\cD_e$: $\Delta_i$ is isomorphic to $\cD_e$ as an $\FF$-algebra but its $\LL$-algebra structure is twisted in the sense that, in $\Delta_i$, we have $d\cdot\lambda=d\sigma_i^{-1}(\lambda)$ for $d\in\cD_e$ and $\lambda\in \LL$. With our convention $\sigma_1=\id$, we may identify $\Delta_1$ with $\cD_e$ as an $\LL$-algebra. 

Note that $\Gal(\LL/\FF)$ acts on $\wt{\cD}$ in the usual way: $\sigma(d\ot\lambda)=d\ot\sigma(\lambda)$, for all $d\in\cD$ and $\lambda\in \LL$, and that $\Gal(\LL/\FF)$ permutes the primitive (central) idempotents $\ve_1,\ve_2,\ld,\ve_k$ of $\wt{\cD}_e$ simply transitively: $\sigma_j(\ve_1)=\ve_j$, so we may relabel them as $\ve_\sigma=\ve_j$ if $\sigma=\sigma_j$. With this relabeling, we have $\sigma(\ve_\tau)=\ve_{\sigma\tau}$.

The same argument works for $\textrm{Cent}_\cD(\LL)=\cD_K$, since it is an $\LL$-algebra. In particular, $\wt{\cD}_K=\cD_K\otimes_\FF \LL$ has the property that $\ve_1\wt{\cD}_K$  is isomorphic to $\cD_K$ as a graded $\LL$-algebra.

Let us pick $t_j\in T$ so that $t_jK\mapsto \sigma_j$ under the isomorphism $T/K\stackrel{\sim}{\longrightarrow}\Gal(\LL/\FF)$. Then $\{t_1,t_2,\ld,t_k\}$ is a transversal for $K$ in $T$. It is convenient to take $t_1=e$. Since $\Int(X_{t_j}\ot 1)$ acts as $\sigma_j\ot\id$ on $\LL\ot_\FF \LL$, it permutes the idempotents $\{\ve_1,\ve_2,\ld,\ve_k\}$, with the above relabeling, in this way: $\Int(X_{t_j}\ot 1)(\ve_\tau)=\ve_{\tau\sigma_j^{-1}}$. It follows that, for any choice of $t_j$, we have
\begin{equation}\label{eq:eXe}
\ve_i(X_{t_j}^{-1}\ot 1)\ve_1=
\begin{cases}
\ve_i(X_{t_i}^{-1}\ot 1)=(X_{t_i}^{-1}\ot 1)\ve_1 & \text{if }i=j;\\
0 & \text{if }i\ne j.
\end{cases}
\end{equation}

Now, since $\wt{\cD}$ is a finite-dimensional graded-simple $\LL$-algebra, there exists a finite-dimensional graded-division $\LL$-algebra $\cE$ and a graded right $\cE$-module $\cV$ of finite rank such that $\wt{\cD}\simeq\End_{\cE}(\cV)$. 
We can choose for $\cV$ any minimal graded left ideal of $\wt{\cD}$. Since $\ve_1$ is a primitive idempotent of  $\wt{\cD}_e$, we will take  $\cV=\wt{\cD}\ve_1$. With this choice of $\cV$, and taking into account \eqref{eq:eXe}, we will have 
\[
\cE\simeq\End_{\wt{\cD}}(\wt{\cD}\ve_1)=\ve_1\wt{\cD}\ve_1=\ve_1\wt{\cD}_K\ve_1=\ve_1\wt{\cD}_K\simeq\cD_K
\]
as graded $\LL$-algebras. In particular, $\cE_e\simeq\cD_e$ and $\Supp \cE=K$. 

At the same time, 
\begin{equation}\label{eq:decompV}
\cV=\wt{\cD}\ve_1=\ve_1\wt{\cD}\ve_1\oplus\ve_2\wt{\cD}\ve_1\oplus\cdots\oplus\ve_k\wt{\cD}\ve_1,
\end{equation}
as a graded right module over $\ve_1\wt{\cD}\ve_1=\ve_1\wt{\cD}_K$. It follows from \eqref{eq:eXe} that $\ve_i\wt{\cD}\ve_1= (X_{t_i}^{-1}\ot 1)\ve_1\wt{\cD}_K$ and so the set
\[
\{(X_{t_1}^{-1}\ot 1)\ve_1, (X_{t_2}^{-1}\ot 1)\ve_1,\ld,(X_{t_k}^{-1}\ot 1)\ve_1\}
\]
is a basis of $\cV$ consisting of homogeneous elements of degrees $t_1^{-1},t_2^{-1},\ld,t_k^{-1}$, respectively. In the language of \cite{EK}, the multiset in $T/K$ defining the graded right $\cE$-module $\cV$ is $T/K$ with multiplicity $1$ at each point.

Finally, in the case $\LL=Z(\cD_e)$, all $\LL$-linear automorphisms of $\cE_e\simeq\cD_e$ are inner, so we may multiply $X_t$, for each $t\in K$, by a suitable element of $\cD_e$ so that $X_t\in\mathrm{Cent}_\cD(\cD_e)$. Then $\cE\simeq\cD_K\simeq\cD_e\ot_\LL\cC$ as a graded $\LL$-algebra, where 
\[
\cC\bydef\mathrm{Cent}_{\cD}(\cD_e)=\mathrm{Cent}_{\cD_K}(\cD_e)=\bigoplus_{t\in K}\LL X_t.
\]

To summarize:

\begin{theorem}\label{GSt1}
Let $\cD$ be a graded-division algebra (or ring), considered as a $T$-graded algebra over the field $\FF=Z(\cD)_e$, where $T=\Supp\cD$. Assume that $\cD$ is finite-dimensional. Let $\LL$ be a Galois extension of $\FF$ contained in $Z(\cD_e)$ and let $K=\Supp\mathrm{Cent}_\cD(\LL)$. Then $\cD\ot_\FF \LL\simeq\End_{\cE}(\cV)$ as a graded $\LL$-algebra, where $\cE\simeq\cD_K$ is a graded-division algebra over $\LL=Z(\cE)_e$ and $\cV$ is the direct sum of graded right $\cE$-modules of rank $1$, with exactly one from each isomorphism class of such modules. Moreover, in the case $\LL=Z(\cD_e)$ (which is a Galois extension of $\FF$ by Lemma \ref{GSl1}), we have $\cE=\cD_e\ot_\LL\cC$ where $\cC=\mathrm{Cent}_{\cD}(\cD_e)$ is a graded-divison algebra over $\LL$ with $1$-dimensional homogeneous components. \qed
\end{theorem}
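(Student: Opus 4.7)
The plan is to assemble the theorem from the structural analysis carried out in the preceding paragraphs. First I would set $\wt{\cD}\bydef\cD\ot_\FF\LL$ with its natural $T$-grading and invoke the preceding lemma to conclude that $\wt{\cD}$ is graded-central-simple over $\LL$. The graded Wedderburn theorem recalled in the introduction then produces a graded-division $\LL$-algebra $\cE$ and a graded right $\cE$-module $\cV$ of finite rank with $\wt{\cD}\simeq\End_\cE(\cV)$, unique up to isomorphism and shift of grading; the problem is thus to identify $\cE$ and $\cV$ explicitly.

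The second step is to make a judicious choice of $\cV$. Using the standard Galois decomposition $\LL\ot_\FF\LL\simeq\LL_1\times\cdots\times\LL_k$, I would pull back the primitive idempotents to pairwise orthogonal primitive idempotents $\ve_1,\ld,\ve_k$ of $\wt{\cD}_e$, relabeled via $\Gal(\LL/\FF)=\{\sigma_1,\ld,\sigma_k\}$ so that $\sigma(\ve_\tau)=\ve_{\sigma\tau}$. Taking $\cV\bydef\wt{\cD}\ve_1$, a minimal graded left ideal, gives
\[
\cE\simeq\End_{\wt{\cD}}(\wt{\cD}\ve_1)=\ve_1\wt{\cD}\ve_1.
\]
The key identity \eqref{eq:eXe}, together with the fact that $\Int(X_{t_j}\ot 1)$ permutes the $\ve_i$ according to the coset $t_jK\in T/K\simeq\Gal(\LL/\FF)$, forces $\ve_1\wt{\cD}\ve_1=\ve_1\wt{\cD}_K$, and the latter is graded-isomorphic to $\cD_K$ via $\ve_1(d\ot 1)\mapsto d$. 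The decomposition $\cV=\bigoplus_i\ve_i\wt{\cD}\ve_1=\bigoplus_i(X_{t_i}^{-1}\ot 1)\ve_1\wt{\cD}_K$ then exhibits a homogeneous $\cE$-basis of $\cV$ of degrees $t_1^{-1},\ld,t_k^{-1}$, i.e.\ a full set of coset representatives of $K$ in $T$; since rank-$1$ graded right modules over a graded-division algebra with support $K$ are classified up to graded-isomorphism by the cosets $T/K$, this is exactly the assertion that $\cV$ contains precisely one representative of each isomorphism class.

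For the final statement, assume $\LL=Z(\cD_e)$. By Skolem--Noether, every $\LL$-linear automorphism of the central simple $\LL$-algebra $\cD_e$ is inner; in particular, for each $t\in K$, $(\Int X_t)|_{\cD_e}$ is inner, so after replacing $X_t$ by $d_t^{-1}X_t$ for a suitable $d_t\in\cD_e^\times$ I may assume $X_t\in\mathrm{Cent}_\cD(\cD_e)$ for every $t\in K$. The $\LL$-span $\cC\bydef\bigoplus_{t\in K}\LL X_t$ is then a graded-division subalgebra with $1$-dimensional homogeneous components, and the multiplication map $\cD_e\ot_\LL\cC\to\cD_K$ is a graded $\LL$-algebra isomorphism.

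The main obstacle I would expect is the bookkeeping in the second step: verifying carefully, from \eqref{eq:eXe}, both that $\ve_1\wt{\cD}\ve_1$ collapses to the $K$-part (because $\Int(X_t\ot 1)$ fixes every $\ve_i$ exactly when $t\in K$) and that the degrees $t_1^{-1},\ld,t_k^{-1}$ arising in the decomposition of $\cV$ actually exhaust the isomorphism classes of rank-$1$ graded $\cE$-modules without repetition. Once this is in place, the identification $\cE\simeq\cD_K$ and the tensor factorization in the case $\LL=Z(\cD_e)$ fall out cleanly.
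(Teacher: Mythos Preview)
Your proposal is correct and follows essentially the same approach as the paper: the argument is precisely the assembly of the structural analysis preceding the theorem, with the same choice $\cV=\wt{\cD}\ve_1$, the same use of \eqref{eq:eXe} to identify $\ve_1\wt{\cD}\ve_1=\ve_1\wt{\cD}_K\simeq\cD_K$, the same homogeneous $\cE$-basis $\{(X_{t_i}^{-1}\ot 1)\ve_1\}$ of $\cV$, and the same Skolem--Noether adjustment for the tensor factorization when $\LL=Z(\cD_e)$. The only cosmetic difference is the order in which you invoke graded Wedderburn versus setting up the idempotents.
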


The algebra $\wt{\cD}=\cD\ot_\FF \LL$ comes with the canonical Galois descent datum: for every $\sigma\in\Gal(\LL/\FF)$, we have the automorphism $\psi_\sigma$ of the graded $\FF$-algebra $\wt{\cD}$ defined by $\psi_\sigma(d\ot\lambda)=d\ot\sigma(\lambda)$, for all $d\in\cD$ and $\lambda\in \LL$, and we can recover $\cD$ as the set of points in $\wt{\cD}$ fixed by all these automorphisms. Recall that, in general, a \emph{Galois descent datum} for a $G$-graded $\LL$-algebra $\wt{\cA}$ is a homomorphism, $\sigma\mapsto\psi_\sigma$, from $\Gal(\LL/\FF)$ to the group $\Aut_\FF^G(\wt{\cA})$ of automorpisms of $\wt{\cA}$ as a graded $\FF$-algebra such that $\psi_\sigma(x\lambda)=\psi_\sigma(x)\sigma(\lambda)$, for all $x\in\wt{\cA}$ and $\lambda\in \LL$. Given such a datum, the set of fixed points $\cA$ is a graded $\FF$-subalgebra of $\wt{\cA}$, and we have an isomorphism $\cA\ot_\FF \LL\to\wt{\cA}$ sending $a\otimes\lambda\mapsto a\lambda$, i.e., $\cA$ is an \emph{$\FF$-form} of $\wt{\cA}$.

The automorphisms of $\End_{\cE}(\cV)$, where $\cE$ is a graded-division algebra and $\cV$ is a graded right $\cE$-module of finite rank, are described by \cite[Theorem 2.10]{EK}: each $\psi_\sigma$ is given by a pair $(\psi_\sigma^0,\psi_\sigma^1)$, where $\psi_\sigma^0:{}^{[t_\sigma]}\cE^{[t_\sigma^{-1}]}\to\cE$, for some $t_\sigma\in T$, is an isomorphism of graded $\FF$-algebras and $\psi_\sigma^1:\cV^{[t_\sigma^{-1}]}\to\cV$ is a $\psi_\sigma^0$-semilinear isomorphism of graded modules, in the following way: $\psi_\sigma(r)=\psi_\sigma^1r(\psi_\sigma^1)^{-1}$ for all $r\in\End_{\cE}(\cV)$. The pair $(\psi_\sigma^0,\psi_\sigma^1)$ is not quite unique: for any nonzero homogeneous element $d\in\cE$, we can replace $\psi_\sigma^1$ by the map $v\mapsto\psi_\sigma^1(v)d$ and, simultaneously, $\psi_\sigma^0$ by $(\Int d)^{-1}\psi_\sigma^0$. Thus, only the coset $t_\sigma^{-1}(\Supp\cE)$ is determined by $\psi_\sigma$. It is convenient to fix $\psi_{\id}^0=\id_\cE$ and $\psi_\id^1=\id_\cV$.

With the usual identification of $Z(\cE)$ and $Z(\End_\cE(\cV))$, we have $\psi_\sigma\,|_{Z(\cE)}=\psi_\sigma^0\,|_{Z(\cE)}$, so $\psi_\sigma^0:{}^{[t_\sigma]}\cE^{[t_\sigma^{-1}]}\to\cE$ is a $\sigma$-semilinear isomorphism of graded $\LL$-algebras. 

In the setting of Theorem \ref{GSt1}, recall the labeling of idempotents $\ve_1,\ve_2,\ld,\ve_k$ of $\wt{\cD}=\cD\ot_\FF \LL$ as $\ve_\sigma:=\psi_\sigma(\ve_1)$, where $\psi_\sigma$ is the canonical descent datum. Thus, we have $\psi_\sigma(\ve_\tau)=\ve_{\sigma\tau}$. Note that, under the isomorphism $\wt{\cD}\simeq\End_\cE(\cV)$, the element $\ve_\sigma$ corresponds to the projection of $\cV=\wt{\cD}\ve_1$ onto the summand $\cV_\sigma\bydef\ve_\sigma\wt{\cD}\ve_1$ in the direct sum decomposition $\cV=\bigoplus_{\sigma\in\Gal(\LL/\FF)}\cV_\sigma$ (see Equation \eqref{eq:decompV}). Recall that the support of $\cV_\sigma$ is the coset of $K$ in $T$ corresponding to $\sigma^{-1}$ under the isomorphism $T/K\stackrel{\sim}{\longrightarrow}\Gal(\LL/\FF)$ sending $tK\mapsto(\Int X_t)|_\LL$. Since $\psi_\sigma^1$ maps the homogeneous component $\cV_t$ onto $\cV_{tt_\sigma^{-1}}$, for any $t\in T$, it maps $\cV_{t^{-1}K}$ onto $\cV_{(t_\sigma t)^{-1}K}$. It follows that $t_\sigma K$ must be the element of $T/K$ corresponding to $\sigma$. In other words, $\psi_\sigma^1(\cV_\tau)=\cV_{\sigma\tau}$ for all $\sigma,\tau\in\Gal(\LL/\FF)$.

We will now prove the converse:

\begin{theorem}\label{GSt2}
Let $\cE$ be a finite-dimensional graded-division algebra with support $K$ over a field $\LL$, with $Z(\cE)_e=\LL$. 
Suppose $\LL$ is a finite Galois extension of $\FF$ and $T$ is a group containing $K$ as a normal subgroup such that $T/K\simeq\Gal(\LL/\FF)$. Consider $\cE$ as a $T$-graded algebra and let $\cV$ be the direct sum of graded right $\cE$-modules of rank $1$, with exactly one from each of the $[T:K]$ isomorphism classes of such modules: $\cV=\bigoplus_{\sigma\in\Gal(\LL/\FF)}\cV_\sigma$, where we chose the labeling in such a way that $\Supp(\cV_\sigma)$ is the coset of $K$ in $T$ corresponding to $\sigma^{-1}$ under the above isomorphism. Suppose that $\wt{\cD}\bydef\End_\cE(\cV)$ has a descent datum $\Gal(\LL/\FF)\to\Aut_\FF^T(\wt{\cD})$, $\sigma\mapsto\psi_\sigma$, where $\psi_\sigma(r)=\psi_\sigma^1r(\psi_\sigma^1)^{-1}$ for all $r\in\wt{\cD}$ and $\psi_\sigma^1(\cV_\tau)=\cV_{\sigma\tau}$ for all $\sigma,\tau\in\Gal(\LL/\FF)$. Then the $\FF$-form $\cD$ of the graded $\LL$-algebra $\wt{\cD}$ determined by the descent (i.e., the set of fixed points in $\wt{\cD}$) is a graded-division algebra with support $T$ and $Z(\cD)_e=\FF$. Moreover, $\LL$ is isomorphic to a subfield of $Z(\cD_e)$ and $\cE\simeq\cD_K$ as graded $\FF$-algebras.
\end{theorem}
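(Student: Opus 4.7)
The plan is to exploit the Galois descent datum $\sigma\mapsto\psi_\sigma$ to deduce properties of the $\FF$-form $\cD$ from the known structure of $\wt{\cD}=\End_\cE(\cV)$. First I would compute supports and homogeneous dimensions. Decomposing $\wt{\cD}=\bigoplus_{\sigma,\rho}\Hom_\cE(\cV_\rho,\cV_\sigma)$ and using the normality of $K$ in $T$ together with the hypothesis on $\Supp(\cV_\sigma)$, one finds that for each $t\in T$ the component $\wt{\cD}_t$ equals $\bigoplus_\sigma\Hom_\cE(\cV_\sigma,\cV_{\sigma\tau^{-1}})_t$, where $\tau\in\Gal(\LL/\FF)$ corresponds to $tK$, and each summand is $1$-dimensional over $\cE_e$. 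This yields $\Supp\wt{\cD}=T$ and a constant value of $\dim_\LL\wt{\cD}_t=[T:K]\dim_\LL\cE_e$; since $\cD\otimes_\FF\LL\simeq\wt{\cD}$, the same follows for $\cD$. Graded-simplicity of $\cD$ comes from that of $\wt{\cD}=\End_\cE(\cV)$ by scalar extension.

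Next I would analyze $\cD_e$ by Galois descent of idempotents. The hypothesis $\psi_\sigma^1(\cV_\tau)=\cV_{\sigma\tau}$ forces $\psi_\sigma(\ve_\tau)=\ve_{\sigma\tau}$ for the primitive central idempotents $\ve_\tau\in\wt{\cD}_e$ projecting onto $\cV_\tau$, so $\Gal(\LL/\FF)$ permutes them simply transitively. Writing $\wt{\cD}_e=\bigoplus_\sigma\ve_\sigma\wt{\cD}_e$ with each factor $\simeq\cE_e$, the restriction of the projection $\pi_e:\wt{\cD}_e\to\ve_e\wt{\cD}_e$ to $\cD_e=\wt{\cD}_e^{\Gal(\LL/\FF)}$ is an $\FF$-algebra isomorphism onto $\ve_e\wt{\cD}_e\simeq\cE_e$: injectivity holds because $\ve_e d=0$ implies $\ve_\sigma d=\psi_\sigma(\ve_e d)=0$ for all $\sigma$, and surjectivity holds because for $x\in\ve_e\wt{\cD}_e$ the element $\sum_\sigma\psi_\sigma(x)$ lies in $\cD_e$ and projects onto $x$. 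Hence $\cD_e$ is a division algebra, and $Z(\cD_e)\simeq Z(\cE_e)$ contains an isomorphic copy of $\LL$.

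Graded-division now follows: since $\dim_\FF\cD_t=\dim_\FF\cD_e$ and $\cD_e$ is a division algebra, each $\cD_t$ is $1$-dimensional over $\cD_e$; for nonzero $a\in\cD_t$ the right $\cD_e$-submodule $a\cdot\cD_{t^{-1}}\subseteq\cD_e$ is nonzero (else $\cD_{t^{-1}}$ would generate a proper graded two-sided ideal, contradicting graded-simplicity), hence equals $\cD_e$, giving a right inverse for $a$; a left inverse follows similarly. For $Z(\cD)_e=\FF$: the identification $Z(\wt{\cD})=Z(\cE)$ gives $Z(\wt{\cD})_e=Z(\cE)_e=\LL$, and by $\sigma$-semilinearity $\psi_\sigma$ restricts to the Galois action on $\LL$, so $Z(\cD)_e=\LL^{\Gal(\LL/\FF)}=\FF$. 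Finally, for $\cD_K\simeq\cE$: for $t\in K$ the constraint $\Supp\cV_\sigma=t\Supp\cV_\tau$ collapses to $\sigma=\tau$, so $\wt{\cD}_K=\bigoplus_\sigma\End_\cE(\cV_\sigma)$, again permuted simply transitively by the descent; the same projection-and-averaging argument yields an $\FF$-algebra iso $\cD_K\simeq\End_\cE(\cV_e)$, which a homogeneous generator of $\cV_e$ of degree $e\in\Supp\cV_e=K$ identifies with $\cE$ as graded $\LL$-algebras.

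The main obstacle I expect is careful bookkeeping of how the grading, the right $\cE$-module structure on $\cV$, and the Galois action interact in the projection-and-averaging argument, in particular ensuring that the resulting identification $\End_\cE(\cV_e)\simeq\cE$ respects the gradings; once the machinery is in place for $\cD_e$, the passage to $\cD_K$ is parallel.
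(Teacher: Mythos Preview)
Your approach is correct in outline and genuinely different from the paper's. The paper works concretely with the matrix representation of $\End_\cE(\cV)$ relative to a homogeneous $\cE$-basis $\{v_\sigma\}$: it writes out the fixed-point equations $r\psi_\sigma^1=\psi_\sigma^1 r$ entry by entry, observes that all entries $d_\sigma$ of a fixed homogeneous $r$ are determined by the single entry $d_{\id}$ (hence all invertible whenever one is nonzero, giving graded-division directly), and then defines an explicit injective $\FF$-algebra map $\Phi:\cE\to\wt{\cD}$ whose image contains $\cD_K$, finishing with a dimension count. You instead exploit the simply transitive Galois permutation of the primitive idempotents $\ve_\sigma\in\wt{\cD}_e$ to run a projection-and-averaging argument, first identifying $\cD_e\simeq\ve_{\id}\wt{\cD}_e\simeq\cE_e$ and then $\cD_K\simeq\End_\cE(\cV_{\id})\simeq\cE$. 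This is cleaner and more conceptual, and it avoids tracking the cocycle-like elements $c_{\sigma,\tau}$ that appear in the paper's matrix computation; the paper's approach, on the other hand, is more explicit and yields the graded-division property in one stroke without appealing to graded-simplicity.

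One step in your argument is not justified as written. To pass from ``$\cD_e$ is a division algebra and $\cD$ is graded-simple'' to ``$\cD$ is graded-division'', you claim that if $a\cD_{t^{-1}}=0$ for some nonzero $a\in\cD_t$ then $\cD_{t^{-1}}$ would generate a proper graded two-sided ideal. But from $a\cD_{t^{-1}}=0$ (and $\cD_t=a\cD_e$) one only gets $\cD_t\cD_{t^{-1}}=0$; the two-sided ideal $\cD\cD_{t^{-1}}\cD$ still contains, for instance, $\cD_{t^{-1}}\cD_t\subset\cD_e$, and there is no reason it should be proper. A clean patch: since $\cD$ is finite-dimensional graded-simple, the graded Wedderburn theorem gives $\cD\simeq\End_{\cD'}(\cW)$ with $\cD'$ graded-division and $\cW$ of some rank $n$; the $n$ orthogonal projection idempotents onto the rank-$1$ summands lie in $\cD_e$, and since $\cD_e$ is a division ring this forces $n=1$, so $\cD$ is graded-division. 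With this fix your proof goes through.
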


\begin{proof}
Pick representatives in each coset of $K$ in $T$ and label them $t_\sigma$, according to the isomorphism $T/K\simeq\Gal(\LL/\FF)$. It is convenient to have $t_\id=e$. Let $v_\sigma\in\cV$ be a nonzero element of degree $t_\sigma^{-1}$. Then $\cV_\sigma=v_\sigma\cE$ and $\{v_\sigma\mid\sigma\in\Gal(\LL/\FF)\}$ is a basis of $\cV$ as an $\cE$-module. Relative to this basis, the homogeneous elements of $\End_\cE(\cV)$ are represented by matrices that have at most one nonzero entry in each row and in each column. Indeed, if $r\in \End_\cE(\cV)$ is homogeneous of degree $g\in T$, then $r(\cV_\tau)\subset\cV_{\tau\sigma_0}$ for all $\tau\in\Gal(\LL/\FF)$, where $\sigma_0$ is the element of $\Gal(\LL/\FF)$ corresponding to $g^{-1}K$. Hence, $r(v_\tau)=v_{\tau\sigma_0}d_\tau$ for some homogeneous $d_\tau\in\cE$. 

Without loss of generality, we may assume that $\psi_\sigma^1$ is a ($\psi_\sigma^0$-semilinear) isomorphism $\cV^{[t_\sigma^{-1}]}\to\cV$. Then $\psi_\sigma^1(v_\tau)= v_{\sigma\tau}c_{\sigma,\tau}$ where $0\ne c_{\sigma,\tau}\in \cE_{t_{\sigma\tau}t_\tau^{-1}t_\sigma^{-1}}$. Since $\psi_\id^1=\id_\cV$, it follows that $c_{\id,\tau}=1$.  

Now, the element $r$, as above, belongs to $\cD$ if and only if $r$ is a fixed point of all $\psi_\sigma$, i.e., $r\psi_\sigma^1=\psi_\sigma^1r$ for all $\sigma\in\Gal(\LL/\FF)$. Evaluating both sides on the basis elements $v_\tau$, we obtain:
\begin{equation*}
r\in\cD\quad\Leftrightarrow\quad d_{\sigma\tau}=c_{\sigma,\tau\sigma_0}\psi_\sigma^0(d_\tau)c_{\sigma,\tau}^{-1}\text{ for all }\sigma,\,\tau\in\Gal(\LL/\FF).
\end{equation*}
In particular, if $r\in\cD$ then $d_\sigma=c_{\sigma,\sigma_0}\psi_\sigma^0(d_\id)c_{\sigma,\id}^{-1}$. As a result, all entries of the matrix representing  $r$ are determined by the single entry $d_\id$. Therefore, if $r$ is nonzero then all $d_\sigma$ are invertible, so $\cD$ is indeed a graded-division algebra. By construction, we have $\wt{\cD}\simeq\cD\ot_\FF \LL$, hence $\cD$ has the same support as $\wt{\cD}$, i.e., $T$. Since $Z(\wt{\cD})$ is identified with $Z(\cE)$ and the property of being graded-central is invariant under field extensions, we conclude that $Z(\cD)_e=\FF$.
 
Observe that if $g\in K$ then $\sigma_0=\id$ and $r$ is represented by a diagonal matrix, in which the entry $d_\tau$ is homogeneous of degree $t_\tau g t_\tau^{-1}$. Let us define $\Phi:\cE\to\wt{\cD}$ by sending, for each $g\in K$, every $d\in\cE_g$ to the (diagonal) operator $\Phi(d)\in\End_\cE(\cV)$ defined by $\Phi(d)(v_\sigma)=v_\sigma((\Int c_{\sigma,\id})\psi_\sigma^0(d))$, for all $\sigma\in\Gal(\LL/\FF)$. Clearly, $\Phi$ is an injective homomorphism of $\FF$-algebras (but not $\LL$-algebras), and it preserves degree since $\psi_\sigma^0:{}^{[t_\sigma]}\cE^{[t_\sigma^{-1}]}\to\cE$ and $c_{\sigma,\id}\in\cE_e$. Also, the fixed point condition in the previous paragraph implies that $\Phi(\cE)\supset\cD_K$. We claim that, actually, $\Phi(\cE)=\cD_K$. Indeed, it is sufficient to prove that the dimensions are equal. We have $\dim_\FF(\cD_K)=\dim_\LL(\wt{\cD}_K)=[T:K]\dim_\LL(\cE)$, since $\wt{\cD}_K$ consists of all operators represented by the diagonal matrices with entries in $\cE$. But $[T:K]=[\LL:\FF]$, so $\dim_\FF(\cD_K)= \dim_\FF(\cE)$, which proves the claim. In particular, $\Phi$ maps $\cE_e$ onto $\cD_e$, hence $\LL$ is isomorphic to a subfield of $Z(\cD_e)$.
\end{proof}
 
\begin{remark}\label{GSr1}
Since $\psi_\sigma^0:{}^{[t_\sigma]}\cE^{[t_\sigma^{-1}]}\to\cE$ is a $\sigma$-semilinear isomorphism (and, for any $t\in K$, ${}^{[t]}\cE^{[t^{-1}]}\simeq\cE$ by means of an inner automorphism), it follows that, in Theorems \ref{GSt1} and \ref{GSt2}, for any $t\in T$, the graded $\LL$-algebra ${}^{[t]}\cE^{[t^{-1}]}$ is isomorphic to the $\sigma$-twist of $\cE$ where $\sigma\in\Gal(\LL/\FF)$ corresponds to $tK$ under the isomorphism $T/K\simeq\Gal(\LL/\FF)$. 
In particular, if $T=K\,\mathrm{Cent}_T(K)$ then $\cE$ is isomorphic to all of its Galois twists.
\end{remark}
 
In view of Theorems \ref{GSt1} and \ref{GSt2}, the  classification of finite-dimensional graded-central graded-division algebras over a field $\FF$, up to isomorphism of graded algebras, reduces to the following three problems:
\begin{description}
\item[\sc CP1] The classification, up to isomorphism, of finite-dimensional central division algebras over each finite Galois extension $\LL$ of $\FF$;
\item[\sc CP2] The classification of graded-division algebras with $1$-dimensional components over $\LL$ (in other words, twisted group algebras of finite groups);
\item[\sc CP3] The classification of the descent data on $\End_\cE(\cV)$ as in Theorem \ref{GSt2} where $\cE=\Delta\ot_\LL\cC$, with $\Delta$ is as in {\sc CP1} and $\cC$ is as in {\sc CP2}.
\end{description}
 
Problem 1 is classical and hard for many important fields $\FF$. Problem 3 also appears to be hard in general (but not for $\FF=\RR$, as we will see in Section \ref{se:real}). For any given field $\LL$ and group $T$, Problem 2 can be approached through  group cohomology: one has to compute $H^2(T,\LL^\times)$ and also find a $2$-cocycle in each cohomology class if one wants to construct the corresponding twisted group algebra explicitly. In the next section we will solve Problem 2 for all finite abelian groups.

\section{Graded-division algebras with 1-dimensional components}\label{se:1d}

Consider a graded-division algebra $\cC$ over a field $\LL$ with $1$-dimensional homogeneous components $\cC_t$, $t\in K$. As before, we can fix nonzero elements $X_t\in\cC_t$, for any $t\in K$. Then, for any $s,t\in K$, we have $X_s X_t=\tau(s,t)X_{st}$ where $\tau(s,t)\in\LL^\times$. The function $\tau:K\times K\to\LL^\times$ is a $2$-cocycle with respect to the trivial action of $K$ on $\LL^\times$. Thus, $\cC$ can be regarded as the twisted group algebra $\LL^\tau K$ with its natural $K$-grading. The graded-isomorphism class of $\cC$ is determined by the cohomology class $[\tau]\in H^2(K,\LL^\times)$. From now on, we will assume that $K$ is a finitely generated abelian group. Then we have a split exact sequence:
\begin{equation*}%\label{eq:short_exact2}
1\to\Ext_\ZZ(K,\LL^\times)\to H^2(K,\LL^\times) \xrightarrow{\#} \Hom_\ZZ(K\wedge _\ZZ K,\LL^\times)\to 1,
\end{equation*}
where an element of $\Ext_\ZZ(K,\LL^\times)$ is sent to the corresponding class of symmetric $2$-cocycles $K\times K\to\LL^\times$ and the map $\#$ sends $[\tau]\in H^2(K,\LL^\times)$ to the alternating bicharacter $\beta_\tau(s,t)\bydef\tau(s,t)/\tau(t,s)$. A (non-canonical) splitting of the above sequence can be obtained by writing $K$ as a direct product of cyclic groups: 
\begin{equation}\label{eq:cyclic_decomp}
K=\langle a_1\rangle\times\cdots\times\langle a_m\rangle,
\end{equation}
and sending an alternating bicharacter $\beta\in \Hom_\ZZ(K\wedge _\ZZ K,\LL^\times)$ to the bicharacter (hence $2$-cocycle) $\tau\in \Hom_\ZZ(K\ot _\ZZ K,\LL^\times)$ defined by
\[
\tau(a_i,a_j)=
\begin{cases}
\beta(a_i,a_j)&\text{ if } i<j;\\
1&\text{ if }i\ge j.
\end{cases}
\]
So, in principle, $H^2(K,\LL^\times)$ is known, since $\Ext_\ZZ$ is an additive functor and 
\[
\Ext_\ZZ(\ZZ,\LL^\times)=1\quad\text{and}\quad\Ext_\ZZ(\ZZ_n,\LL^\times)=\LL^\times/(\LL^\times)^{[n]},
\] 
where we are using the following

{\sc Notation}. For any abelian group $A$, we define 
\[
A^{[n]}\bydef\{a^n\mid a\in A\}\quad\text{and}\quad A_{[n]}\bydef\{ a\in A\mid a^n=e\}.
\]
Also, $\Tor(A)$ will denote the periodic (torsion) part of $G$ and $\Tor_p(A)$ the $p$-primary component of $A$ (i.e., the union of $A_{[p^k]}$ over $k\in\NN$), where $p$ is a prime number. It is convenient to define $\Tor_{p'}(A)\bydef\prod_{q\ne p}\Tor_q(A)$. Finally, $o(a)$ will denote the order of an element $a\in A$.

We would like to give an explicit description of the graded-division algebra $\cC$. First, we note that $\beta=\beta_\tau$ describes the commutation relations in $\cC$: 
\[
X_sX_t=\beta(s,t)X_tX_s.
\]
(This is the easiest way to see that $\beta_\tau$ is indeed a bicharacter and it is independent of the choice of the elements $X_t$ or, in other words, of the $2$-cocycle $\tau$ representing a given cohomology class.) Unless $\Ext_\ZZ(K,\LL^\times)=1$ --- as is the case, for example when $\LL$ is algebraically closed, because then $\LL^\times$ is a divisible group, $\beta$ alone does not determine the graded-isomorphism class of $\cC$.

\begin{remark}
$\beta$ determines the graded-isomorphism class of $\cC\otimes_\LL\wb{\LL}$, where $\wb{\LL}$ is the algebraic closure of $\LL$. Thus, if $\LL$ is perfect, $\cC$ can be recovered through Galois descent, but we are going to take a more direct approach.
\end{remark}

Let $F_K=\prod_{t\in\Tor(K)}F_t$, where $F_t\bydef\LL^\times/(\LL^\times)^{[o(t)]}$. For any $t\in\Tor(K)$, the element  $X_t^{o(t)}\in\cC_e^\times=\LL^\times$ depends on the choice of $X_t$, however its image in $F_t$ does not. We denote this image by $\mu(t)$. The function $\mu:\Tor(K)\to\bigcup_{t\in\Tor(K)}F_t$ can be regarded as an element of $F_K$. 

The triple $(K,\beta,\mu)$ determines $\cC$ up to isomorphism of graded algebras. Indeed, if we write $K$ as the direct product of cyclic groups \eqref{eq:cyclic_decomp}, we can present $\cC$ in terms of generators and defining relations as follows. First, we give the free associative algebra $\LL\langle X^\pm_1,\ld, X^\pm_m\rangle$ a $K$-grading by setting $\deg X^+_i\bydef a_i$ and $\deg X^-_i\bydef a_i^{-1}$. (This is the $K$-grading induced from the standard $\ZZ^{2m}$-grading of the free associative algebra of rank $2m$ by a homomorphism $\ZZ^{2m}\to K$.) Second, we consider the two-sided ideal $J$ generated by the homogeneous elements of three types:
\begin{enumerate}
	\item[(i)] $X_i^- X_i^+ - 1$ and $X_i^+ X_i^- - 1$ for all $i$;
	\item[(ii)] $X_iX_j-\beta_{ij}X_jX_i$ for all $i<j$;
	\item[(iii)] $X_i^{o(a_i)}-\mu_i$ for all $i$ such that $o(a_i)<\infty$;
\end{enumerate}
where $\beta_{ij}=\beta(a_i,a_j)$ and $\mu_i\in\LL^\times$ is any representative of $\mu(a_i)\in \LL^\times/(\LL^\times)^{[o(a_i)]}$. Then, for any choice of representatives, we have
\[
\cC\simeq\LL\langle X^\pm_1,\ld,X^\pm_m\rangle/J
\]
as graded algebras. Indeed, for each $i$ with $o(a_i)<\infty$, we can find $\lambda_i\in\LL^\times$ such that $\lambda_i^{o(a_i)}=\mu_i X_{a_i}^{-o(a_i)}$, and we let $\lambda_i=1$ for all $i$ with $o(a_i)=\infty$. Then the surjective homomorphism of graded algebras $\LL\langle X_1^\pm,\ld,X^\pm_m\rangle\to\cC$ sending $X^+_i\mapsto\lambda_i X_{a_i}$ and $X^-_i\mapsto\lambda_i^{-1} X_{a_i}^{-1}$ factors through a homomorphism $\LL\langle X^\pm_1,\ld,X^\pm_m\rangle/J\to\cC$, which must be an isomorphism thanks to the following result. 
 
\begin{proposition}\label{D_beta_mu}
For any scalars $\beta_{ij}\in \LL^\times$ ($i<j$) and $\mu_i\in \LL^\times$ satisfying $\beta_{ij}^{o(a_i)}=1$ if $o(a_i)<\infty$ and $\beta_{ij}^{o(a_j)}=1$ if $o(a_j)<\infty$, let $J$ be the (graded) ideal of $\LL\langle X^\pm_1,\ld,X^\pm_m\rangle$ generated by the elements (i), (ii) and (iii) above. Then 
\[
\cD(K,\beta_{ij},\mu_i)\bydef\LL\langle X^\pm_1,\ld,X^\pm_m\rangle/J
\] 
is a graded-division algebra with support $K$ and $1$-dimensional homogeneous components. 
\end{proposition}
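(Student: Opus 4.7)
The plan is to build a concrete model $\cA$ with the desired properties and then identify $\cD(K,\beta_{ij},\mu_i)$ with $\cA$ via the universal property of the latter, together with a dimension bound.

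For the model, I would first construct the $\ZZ^m$-graded \emph{skew Laurent algebra} $\cR=\LL_\beta[t_1^{\pm 1},\ldots,t_m^{\pm 1}]$ defined by the relations $t_it_j=\beta_{ij}t_jt_i$ for $i<j$. Concretely, take $\cR$ to be the free $\LL$-module with basis $\{t^k\bydef t_1^{k_1}\cdots t_m^{k_m}:k\in\ZZ^m\}$ and multiplication $t^k\cdot t^l=c(k,l)\,t^{k+l}$, where the scalar $c(k,l)\in\LL^\times$ is the product of factors $\beta_{ij}^{\pm 1}$ produced by straightening monomials into normal form. The key observation is that $c$ is bimultiplicative in $(k,l)$ (i.e., a $\ZZ$-bilinear $2$-cocycle on $\ZZ^m$), which makes the multiplication associative. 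Pushing the $\ZZ^m$-grading forward along the surjection $\pi:\ZZ^m\to K$ sending the $i$-th generator to $a_i$ makes $\cR$ a $K$-graded algebra.

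The next step is to observe that the elements $z_i\bydef t_i^{o(a_i)}-\mu_i$, one for each $i$ with $o(a_i)<\infty$, are homogeneous of degree $e\in K$ and \emph{central} in $\cR$: the hypotheses $\beta_{ij}^{o(a_i)}=1$ and $\beta_{ij}^{o(a_j)}=1$ give $t_jt_i^{o(a_i)}=t_i^{o(a_i)}t_j$ for all $i,j$. Let $I$ be the two-sided ideal generated by the $z_i$, and set $\cA\bydef\cR/I$. A PBW-type argument shows that $\cR$ is free as a module over the commutative polynomial subalgebra $\LL[t_i^{o(a_i)}:o(a_i)<\infty]\subset Z(\cR)$ with basis the monomials $t_1^{r_1}\cdots t_m^{r_m}$ where $0\le r_i<o(a_i)$ when $o(a_i)<\infty$ and $r_i\in\ZZ$ otherwise. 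Since the $z_i$ form a regular sequence in this polynomial subalgebra, the classes of these monomials form an $\LL$-basis of $\cA$. Via $\pi$ these representatives are in bijection with $K$, so $\cA$ is $K$-graded with one-dimensional homogeneous components, and each image $t_i+I$ is a homogeneous unit; hence $\cA$ is a graded-division algebra with support $K$ and one-dimensional components.

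By construction, the images of $t_i^{\pm 1}$ in $\cA$ satisfy relations (i), (ii), (iii), so the assignment $X_i^{\pm}\mapsto t_i^{\pm 1}+I$ defines a surjective homomorphism of graded $\LL$-algebras
\[
\Psi:\cD(K,\beta_{ij},\mu_i)\longrightarrow\cA.
\]
To finish, I would argue that each homogeneous component of $\cD(K,\beta_{ij},\mu_i)$ is at most one-dimensional: using (ii) and (iii) repeatedly, every monomial in the $X_i^{\pm 1}$ of a given degree $k\in K$ can be rewritten modulo $J$ as a scalar multiple of a single normal-form monomial $X_1^{r_1}\cdots X_m^{r_m}$. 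Combined with surjectivity of $\Psi$ onto the one-dimensional $\cA_k$, this forces $\Psi$ to be an isomorphism, and all asserted properties transfer from $\cA$ to $\cD(K,\beta_{ij},\mu_i)$.

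I expect the main obstacle to be the PBW-type freeness used to describe $\cA$: one must verify that the central ideal $I$ does not collapse any graded components of $\cR$ unexpectedly. This reduces to checking that the $z_i$ form a regular sequence in the commutative polynomial subalgebra $\LL[t_i^{o(a_i)}:o(a_i)<\infty]$ of $Z(\cR)$, over which $\cR$ is finitely generated and free on the normal-form monomials, and then transferring the conclusion to $\cR/I$ itself — routine in principle, but the step deserving the most care.
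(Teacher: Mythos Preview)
Your approach is essentially the paper's: both first pass to the quantum torus (your $\cR$ is the paper's $\cQ$), observe that the elements $t_i^{o(a_i)}$ are central thanks to the hypotheses on $\beta_{ij}$, and then quotient by the central ideal generated by the $t_i^{o(a_i)}-\mu_i$, identifying a normal-form monomial basis via freeness over the central subalgebra these elements generate; the only cosmetic difference is that you build $\cR$ directly as a twisted group algebra of $\ZZ^m$ and then exhibit a surjection $\cD\to\cR/I$, whereas the paper reaches $\cQ$ as an intermediate quotient of the free algebra. One small correction to the step you yourself flagged (and which the paper's proof glosses over in the same way): $\cR$ is not free over the \emph{polynomial} ring $\LL[t_i^{o(a_i)}:o(a_i)<\infty]$ on the stated basis, since negative exponents of $t_i$ are not reached---use instead the Laurent subalgebra $\LL[t_i^{\pm o(a_i)}:o(a_i)<\infty]$, over which freeness on the normal-form monomials does hold and the regular-sequence/quotient argument goes through unchanged.
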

 
\begin{proof}
For any $n\in\ZZ_{\ge 0}$, denote $X_i^n\bydef (X_i^+)^n$ and $X_i^{-n}\bydef(X_i^-)^n$. It is clear that $\cD(K,\beta_{ij},\mu_i)$ is spanned by the images of the monomials $X_1^{n_1}\cdots X_m^{n_m}$, where $n_i\in\ZZ$ and $0\le n_i<o(a_i)$ if $o(a_i)<\infty$. All of these monomials have different degrees in $K$, so the homogeneous components of $\cD(K,\beta_{ij},\mu_i)$ have dimension at most $1$, but one needs to check that the support of the grading is the entire group $K$, because a priori some of these monomials could belong to $J$. 
 
This can be done, for example, by first considering the algebra 
\[
\cQ\bydef \LL\langle X^\pm_1,\ld,X^\pm_m\rangle/J_0
\]
where $J_0$ is the ideal generated by the elements (i) and (ii). It is well known and easy to show (for example, by considering iterated Ore extensions) that $\cQ$ has a basis consisting of (the images of) the monomials $X_1^{n_1}\cdots X_m^{n_m}$, $n_i\in\ZZ$. By our condition on the scalars $\beta_{ij}$, the monomials $X_i^{o(a_i)}$, for all $i$ with $o(a_i)<\infty$, are central in $\cQ$. Let $\cA$ be the subalgebra that they generate in $\cQ$. As an $\cA$-module, $\cQ$ has a basis consisting of the monomials satisfying $0\le n_i<o(a_i)$ for all $i$ with $o(a_i)<\infty$. It follows that these monomials form an $\LL$-basis of $\cQ/\wb{J}$, where $\wb{J}=J/J_0$ is the ideal generated by the (images of) the elements (iii). Clearly, $\cQ/\wb{J}\simeq \cD(K,\beta_{ij},\mu_i)$.
\end{proof}

\begin{remark}
The algebra $\cQ$ in the above proof is known as the \emph{quantum torus} of dimension $m$ with parameters $\beta_{ij}$. So, one might call our algebras $\cD(K,\beta_{ij},\mu_i)$ \emph{quantum quasitori}.
\end{remark}

The conditions on $\beta_{ij}$ in Proposition \ref{D_beta_mu} are necessary and sufficient for the existence of an alternating bicharacter $\beta: K\times K\to \LL^\times$ such that $\beta(a_i,a_j)=\beta_{ij}$ for all $i<j$. Clearly, the scalars $\beta_{ij}$ determine $\beta$ uniquely, and $\beta$ is the bicharacter associated to the graded-division algebra $\cD(K,\beta_{ij},\mu_i)$ by its commutation relations. Also, the function $\mu\in F_K$ associated to $\cD(K,\beta_{ij},\mu_i)$ satisfies $\mu(a_i)=\mu_i(\LL^\times)^{[o(a_i)]}$ for all $i$ with $o(a_i)<\infty$. It follows that $\cD(K,\beta_{ij},\mu_i)\simeq\cD(K,\beta_{ij}^\prime,\mu_i^\prime)$, as graded algebras, if and only $\beta_{ij}=\beta_{ij}^\prime$ for all $i<j$ and $\mu_i\equiv\mu_i^\prime\pmod{(\LL^\times)^{[o(a_i)]}}$ for all $i$ with $o(a_i)<\infty$. 

\begin{remark}\label{GDr1} 
With $\beta$ fixed, we can view $\cC$ as an object in the Yetter-Drinfeld category ${}_K^K\cY\cD$ (see e.g. \cite[Chapter 10]{Mont}), with $\LL K$-coaction given by the grading: $x\mapsto g\ot x$ for all $x\in\cC_g$, and the $\LL K$-action induced from the coaction through $\beta$: $g\cdot x=\beta(g,h)x$ for all $x\in\cC_h$. Then any decomposition of $K$ as a direct product of cyclic groups allows us to write $\cC$ as a tensor product in the category ${}_K^K\cY\cD$ of algebras of the form $\LL[X,X^{-1}]$ (Laurent polynomials) and $\LL[X]/(X^n-\mu)$, $\mu\in \LL^\times$. This interpretation can be used to obtain an alternative proof of Proposition \ref{D_beta_mu}, since $\LL\langle X^\pm_1,\ld,X^\pm_m\rangle/J$ maps surjectively on such a tensor product.
\end{remark} 
 
The classification of graded-division algebras as $\cD(K,\beta_{ij},\mu_i)$ has a disadvantage of being dependent on the choice of decomposition \eqref{eq:cyclic_decomp} of $K$, so one may prefer to use the functions $\beta$ and $\mu$ rather than their values at chosen generators of $K$. Let us record the properties of $\mu$ with respect to multiplication. First of all, for any $g\in\Tor(T)$ and any divisor $n$ of $o(g)$, the element  $X_g^n$ is a scalar multiple of $X_{g^n}$ and $(X_g^n)^{o(g^n)}=X_g^{o(g)}$, so we have:
\begin{equation}\label{GDe0}
n\mid o(g)\quad\Rightarrow\quad \mu(g^n)=\mu(g)(\LL^\times)^{[o(g)/n]},
\end{equation}
which is the image in $\LL^\times/(\LL^\times)^{[o(g)/n]}$ of the element $\mu(g)\in\LL^\times/(\LL^\times)^{[o(g)]}$.
%Thus $\mu$ actually belongs to $F_K^0\subset F_K$, where $F_K^0$ is the inverse limit of the groups $F_t$ with respect to the quotient maps $F_t\to F_{t^n}$. 
If $g,h\in\Tor(K)$ have coprime orders, then $o(gh)=o(g)o(h)$ and $X_g$ commutes with $X_h$ because $\beta(g,h)=1$. In this case, we have 
 \[ 
 (X_gX_h)^{o(gh)}=(X_g^{o(g)})^{o(h)}(X_h^{o(h)})^{o(g)}
 \]
and hence
\begin{equation}\label{GDe1}
\gcd(o(g),o(h))=1\quad\Rightarrow\quad\mu(gh)=\mu(g)^{o(h)}\mu(h)^{o(g)}.
\end{equation}
This expression is well-defined as an element of $\LL^\times/(\LL^\times)^{[o(gh)]}$.
 
Thus it suffices to describe the restriction of $\mu$ to $\Tor_p(K)$, for each prime divisor $p$ of $|\Tor(K)|$.  So, consider $g,h\in\Tor_p(K)$, with $o(g)=p^k$, $o(h)=p^\ell$ and, say, $k\ge\ell$. If $k=\ell$, assume that $o(gh)=p^k$. If $k>\ell$, this holds automatically. Using the formula 
\[
(X_gX_h)^n=\beta(g,h)^{-\binom{n}{2}}X_g^nX_h^n,
\] 
we obtain:  
\begin{align}
o(g)=p^k>o(h)=p^\ell & \quad\Rightarrow\quad  \mu(gh)=\mu(g)\mu(h)^{p^{k-\ell}};\label{GDe2}\\
o(g)=o(h)=o(gh)=p^k & \quad\Rightarrow\quad \mu(gh)=\begin{cases}
\mu(g)\mu(h) & \text{if $p$ is odd;}\\
\beta(g,h)^{p^{k-1}}\mu(g)\mu(h)&\text{if }p=2.
\end{cases}\label{GDe3}
\end{align}
Equation \eqref{GDe3} holds for $k\ge 1$; also note that $\beta(g,h)^{p^{k-1}}\in\{\pm 1\}$.
 
\begin{theorem}\label{th:D_beta_mu}
Let $\LL$ be a field and let $K$ be a finitely generated abelian group. For any alternating bicharacter $\beta:K\times K\to \LL^\times$ and any $\mu\in F_K\bydef\prod_{t\in\Tor(K)}F_t$, $F_t:=\LL^\times/(\LL^\times)^{[o(t)]}$, satisfying Equations \eqref{GDe0} through \eqref{GDe3}, there is a unique, up to isomorphism, graded-division algebra $\cD(K,\beta,\mu)=\bigoplus_{t\in K}\cD_t$ over $\LL$ such that 
\begin{enumerate}
\item[(i)] $\dim\cD_t=1$ for all $t\in K$; 
\item[(ii)] $x y=\beta(s,t)y x$ for all $x\in\cD_s$, $y\in\cD_t$, $s,t\in K$;
\item[(iii)] $x^{o(t)}\in\mu(t)$ for all $x\in\cD_t$, $t\in\Tor(K)$. 
\end{enumerate}
Every graded-division algebra $\cC=\bigoplus_{t\in K}\cC_t$ with $\dim\cC_t=1$, for all $t\in K$, is isomorphic to one, and only one, of the graded-division algebras $\cD(K,\beta,\mu)$.
\end{theorem}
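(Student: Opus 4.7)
\emph{Existence.} The plan is to build $\cD(K,\beta,\mu)$ from Proposition \ref{D_beta_mu}. Fix a direct product decomposition \eqref{eq:cyclic_decomp} of $K$, set $\beta_{ij}\bydef\beta(a_i,a_j)$ for $i<j$, and choose representatives $\mu_i\in\LL^\times$ of the cosets $\mu(a_i)$ for those $i$ with $o(a_i)<\infty$. Since $\beta$ is alternating, $\beta_{ij}^{o(a_i)}=\beta_{ij}^{o(a_j)}=1$ automatically whenever the orders are finite, so the proposition produces a graded-division algebra $\cD(K,\beta_{ij},\mu_i)$ with $1$-dimensional components, giving (i). The commutation rule (ii) is built into the defining relations on generators and extends to all homogeneous pairs by bilinearity. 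For (iii) with $t=a_i$, the relation $X_{a_i}^{o(a_i)}=\mu_i$ is one of the defining relations. For a general $t\in\Tor(K)$, the values $\mu(a_i)$ determine $\mu(t)$ through repeated use of \eqref{GDe0}--\eqref{GDe3}: \eqref{GDe1} reduces the computation to a single $p$-primary component, and then \eqref{GDe0}, \eqref{GDe2}, \eqref{GDe3} express $\mu(t)$ in terms of $\mu$ on generators of $p$-power order together with values of $\beta$. The crucial observation is that these formulas were derived from (i) and (ii) alone, so they apply equally to the element $X_t^{o(t)}$ inside $\cD(K,\beta_{ij},\mu_i)$; hence the coset of $X_t^{o(t)}$ in $F_t$ is exactly $\mu(t)$, proving (iii).

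\emph{Uniqueness.} Let $\cC$ be any graded-division algebra satisfying (i)--(iii) for a given triple $(K,\beta,\mu)$. Pick nonzero $Y_i\in\cC_{a_i}$; by (ii) these satisfy $Y_iY_j=\beta_{ij}Y_jY_i$ for $i<j$, and by (iii), $Y_i^{o(a_i)}\equiv\mu_i\pmod{(\LL^\times)^{[o(a_i)]}}$ whenever $o(a_i)<\infty$. Rescale each such $Y_i$ by a suitable $\lambda_i\in\LL^\times$ so that $(\lambda_iY_i)^{o(a_i)}=\mu_i$ on the nose; this is possible because $\mu_i(Y_i^{o(a_i)})^{-1}$ lies in $(\LL^\times)^{[o(a_i)]}$ by construction. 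The universal property of the presentation used in Proposition \ref{D_beta_mu} then yields a surjective graded homomorphism $\cD(K,\beta_{ij},\mu_i)\to\cC$, which must be an isomorphism because both sides are $1$-dimensional in every degree $t\in K$. In particular, $\cD(K,\beta,\mu)$ is well defined up to graded-isomorphism, independently of the decomposition \eqref{eq:cyclic_decomp} and of the lifts $\mu_i$.

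\emph{Last assertion and main obstacle.} For an arbitrary graded-division algebra $\cC$ with $1$-dimensional components supported on $K$, the commutation relations define a bicharacter $\beta$ and the $o(t)$-th powers of homogeneous elements define $\mu\in F_K$; these satisfy \eqref{GDe0}--\eqref{GDe3} by the computations preceding the theorem. The uniqueness step then yields $\cC\simeq\cD(K,\beta,\mu)$, while the intrinsic extraction of $(\beta,\mu)$ from $\cC$ gives the ``one, and only one'' clause. The hardest step is the inductive verification of (iii) for a general $t$: one must argue that the identities \eqref{GDe0}--\eqref{GDe3} are sufficient (not merely necessary) to determine $\mu$ on all of $\Tor(K)$ from its values on a cyclic generating set, handling each prime separately via \eqref{GDe1} and treating $p=2$ with extra care in \eqref{GDe3}, where the $\beta$-twist appears.
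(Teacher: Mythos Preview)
Your proof is correct and follows essentially the same approach as the paper's. The one refinement the paper makes explicit is to choose the decomposition \eqref{eq:cyclic_decomp} so that each torsion generator $a_i$ has prime power order; this streamlines the verification that \eqref{GDe0}--\eqref{GDe3} determine $\mu$ from its values on the $a_i$, since one can then work entirely within a single primary component using \eqref{GDe2} and \eqref{GDe3} without first passing through \eqref{GDe1} to extract the $p$-parts of the generators.
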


\begin{proof}
Fix a decomposition \eqref{eq:cyclic_decomp} of $K$ such that the orders of those $a_i$ that belong to $\Tor(K)$ are prime powers. Let $\beta_{ij}=\beta(a_i,a_j)$, for all $i<j$, and let $\mu_i\in\LL^\times$ be any representative of $\mu(a_i)$, for all $i$ with $a_i\in\Tor(K)$. We have already seen that the graded-division algebra $\cD(K,\beta_{ij},\mu_i)$ of Proposition \ref{D_beta_mu} satisfies conditions (i) and (ii). Also, the function $\mu'\in F_K$ associated to $\cD(K,\beta_{ij},\mu_i)$ satisfies Equations \eqref{GDe0} through \eqref{GDe3} and coincides with $\mu$ on the generators $a_i$ of $\Tor(K)$. It is easy to see that this forces $\mu=\mu'$. The result follows.
\end{proof}

\begin{remark}\label{re:tensor_prod_decomp}
If $K$ is finite, we have a \emph{canonical} decomposition $K=\prod_p\Tor_p(K)$, which yields the decomposition of $\cC$ as an ordinary tensor product of the graded subalgebras $\cC(p)\bydef\cC_{\Tor_p(K)}$. 
\end{remark}

\section{The real case}\label{se:real}

We will now classify, up to isomorphism, the finite-dimensional graded-division algebras whose support is a (finite) abelian group over the field $\RR$, following the steps {\sc CP1}-{\sc CP3} proposed in Section \ref{se:gen}. In the treatment of algebras with $1$-dimensional homogeneous components, we will follow the approach of Section \ref{se:1d}. We keep the notation introduced in these two sections. Since we do not use the topology of $\RR$, our results are valid for any real closed field $\FF$ (if we interpret $\CC$ as $\FF[\sqrt{-1}]$, which is the algebraic closure of $\FF$). We fix $\bi=\sqrt{-1}$.

\medskip

{\sc CP1:} The only finite field extensions are $\RR$ and $\CC$. By a classical result of Frobenius, the only finite-dimensional central division algebras over $\RR$ are $\RR$ and $\HH$, and the only central division algebra over $\CC$ is $\CC$ itself.

\medskip

{\sc CP2:} We have to classify finite-dimensional graded-division algebras with $1$-dimensional components over $\LL=\RR$ and $\LL=\CC$.

\noindent $\boxed{\LL=\CC}$ Since $\LL$ is algebraically closed, in this case any finite-dimensional graded-division algebra $\cC$ has $1$-dimensional components. Also, $\mu$ is trivial, so such algebras are completely classified by $(K,\beta)$. 

\noindent $\boxed{\LL=\RR}$  Here $\LL$ is not algebraically closed, but $(\LL^\times)^{[n]}=\LL^\times$ for all odd $n$. Moreover, the only roots of unity in $\LL$ are $\pm 1$. By Remark \ref{re:tensor_prod_decomp}, we can write a finite-dimensional graded algebra $\cC$ with $1$-dimensional components as $\cC=\cC(2)\otimes\cC(2')$ where $\cC(2)=\cC_{\Tor_2(K)}$ and $\cC(2')=\cC_{\Tor_{2'}(K)}$. Since $\cC(2')$ has trivial $\beta$ and $\mu$, it must be the group algebra of $\Tor_{2'}(K)$ with coefficients in $\LL$. As to $\cC(2)$, it is determined by $(\Tor_2(K),\beta,\mu)$, where $\beta:\Tor_2(K)\times\Tor_2(K)\to\{\pm 1\}$, and $\mu$ can be regarded as a function $\Tor_2(K)\to\{\pm 1\}$ since $\LL/(\LL^\times)^{[n]}\simeq\{\pm 1\}$ for all even $n$. This function satisfies $\mu(e)=1$ and Equations \eqref{GDe2} and \eqref{GDe3} in Section \ref{se:1d}, which boil down to the following:
\begin{equation*}
\mu(gh)=\begin{cases}
\mu(g)&\text{if }o(g)>o(h);\\
\mu(g)\mu(h)&\text{if } o(gh)=o(g)=o(h)>2;\\
\beta(g,h)\mu(g)\mu(h)&\text{if }o(gh)=o(g)=o(h)=2.
\end{cases}
\end{equation*}
The last of the above equations clearly holds if one of the elements $gh, g, h$ equals $e$, so we have 
\begin{equation}\label{eq:quad_form}
\mu(gh)=\beta(g,h)\mu(g)\mu(h)\quad\text{for all }g,h\in K_{[2]}.
\end{equation}
In other words, regarding $K_{[2]}$ and $\{\pm 1\}$ as vector spaces over the field of two elements, the restriction of $\mu|_{K_{[2]}}$ is a \emph{quadratic form with polarization $\beta|_{K_{[2]}\times K_{[2]}}$} . Moreover, Equation \eqref{GDe0} yields $\mu(g)=\mu(g^{2^{k-1}})$ if $o(g)=2^k$ with $k>1$. Therefore, $\mu$ is completely determined by its restriction to $K_{[2]}$, which we still denote by $\mu$.

\begin{remark}\label{RCr1}
If $\Tor_2(K)$ is an elementary abelian $2$-group, then $\mu$ determines $\beta$.
\end{remark}

\medskip

{\sc CP3:} In the study of Galois descent for $\wt{\cD}=\End_\cE(\cV)$ as in Theorem \ref{GSt2}, the only nontrivial case is $\LL=\CC$; then $\cE=\cC$ and $\cC$ is determined by $(K,\beta)$, where $K$ has index $2$ in $T$ since $|\Gal(\CC/\RR)|=2$. By Remark \ref{GSr1}, $\cC$ is isomorphic to its complex conjugate, which is the only nontrivial Galois twist in this case, given by the generator $\gamma$ of $\Gal(\CC/\RR)$. It immediately follows from the commutation relations that this Galois twist of $\cC$ is determined by $(K,\gamma\circ\beta)=(K,\beta^{-1})$. Therefore, $\beta$ must take real values, so $\beta:K\times K\to\{\pm 1\}$. It follows that, if we scale the generators $X_{a_i}$ (associated to a decomposition \eqref{eq:cyclic_decomp}) so that $X_{a_i}^{o(a_i)}=1$, they will generate over $\RR$ a real form of the graded $\CC$-algebra $\cC$. 

Now, the Galois descent data for $\End_\cC(\cV)$ is just one $\CC$-antilinear automorphism $\psi=\psi_\gamma$ satisfying $\psi^2=\id$. Recall that $\psi$ is given by a pair $(\psi_0,\psi_1)$, where $\psi_0:\cC\to\cC$ is $\CC$-antilinear, and $\psi_1:\cV\to\cV$ is $\psi_0$-semilinear and must swap $\cV_\id$ and $\cV_\gamma$. Also recall that the pair $(\psi_0,\psi_1)$ is not unique, but can be adjusted using any nonzero homogeneous $c\in\cC$ to replace $\psi_0$ by $(\Int c)^{-1}\psi_0$ and $\psi_1$ by the map $v\mapsto\psi_1(v)c$. Thus, we may assume that $\psi_1$ is a ($\psi_0$-semilinear) isomorphism $\cV^{[t_0]}\to\cV$ where $t_0$ is an arbitrarily chosen element of the coset $T\smallsetminus K$ ($t_0=t_\gamma^{-1}$ in the notation of the proof of Theorem \ref{GSt2}). We fix $t_0$, so $\psi_1$ is determined up to multiplication by a nonzero element $c\in\cC_e=\CC$ and $\psi_0$ is uniquely determined.

\begin{lemma}\label{RCl1}
Any $\CC$-antilinear automorphism of the graded algebra $\cC$ is involutive.
\end{lemma}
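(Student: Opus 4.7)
The plan is to exploit that $\cC$ has $1$-dimensional components and $K$ is finite, so a $\CC$-antilinear graded automorphism is completely determined by a tuple of scalars, and those scalars can be shown to lie on the unit circle.

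First, I would fix nonzero elements $X_t\in\cC_t$ for every $t\in K$, with $X_e=1$. Since $\psi$ is a graded automorphism and each $\cC_t$ is $1$-dimensional over $\CC$, for each $t$ there is a unique $\lambda_t\in\CC^\times$ with $\psi(X_t)=\lambda_t X_t$. Next, I would compute $\psi^2$ directly: $\psi$ is $\CC$-antilinear, so $\psi^2$ is $\CC$-linear, and
\[
\psi^2(X_t)=\psi(\lambda_t X_t)=\overline{\lambda_t}\,\psi(X_t)=|\lambda_t|^2\, X_t.
\]
Therefore the lemma reduces to showing $|\lambda_t|=1$ for every $t\in K$.

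For this, I would use that $K$ is finite (since $\cC$ is finite-dimensional), so each $t$ has finite order $o(t)$, and $X_t^{o(t)}\in \cC_e=\CC$ is a nonzero scalar, say $\nu_t\in\CC^\times$. Applying $\psi$ in two ways (as an algebra homomorphism on $X_t^{o(t)}$ and as an antilinear map on the scalar $\nu_t$) yields
\[
\lambda_t^{o(t)}\,\nu_t=\psi(X_t)^{o(t)}=\psi(X_t^{o(t)})=\psi(\nu_t)=\overline{\nu_t},
\]
so $\lambda_t^{o(t)}=\overline{\nu_t}/\nu_t$, which has absolute value $1$. Taking absolute values, $|\lambda_t|^{o(t)}=1$, hence $|\lambda_t|=1$.

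There is no real obstacle here: the argument is a short computation, the only ingredients being that $\psi$ preserves each $1$-dimensional component, that $\psi$ is multiplicative and antilinear on scalars, and that $K$ is torsion. (As a sanity check of the setting, the same comparison of $\psi(X_sX_t)$ with $\psi(\beta(s,t)X_tX_s)$ shows $\overline{\beta(s,t)}=\beta(s,t)$, consistent with the earlier observation in {\sc CP3} that $\beta$ must take values in $\{\pm1\}$.) Combining these, $\psi^2(X_t)=X_t$ for every $t$, and since the $X_t$ span $\cC$ over $\CC$ and $\psi^2$ is $\CC$-linear, this gives $\psi^2=\id_\cC$.
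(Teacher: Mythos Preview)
Your proof is correct. Both your argument and the paper's reduce to showing $|\lambda_t|=1$ for each $t\in K$, but you reach this conclusion by a different route. The paper first invokes the existence of a real form $\cC_\RR$ (established just before the lemma), chooses the $X_s$ to lie in $\cC_\RR$, and observes that then the scalars $\lambda_s$ assemble into a group homomorphism $\chi\in\Hom_\ZZ(K,\CC^\times)$; since $K$ is finite, $\chi(s)$ is a root of unity and $|\chi(s)|=1$. You instead work with an arbitrary homogeneous basis and use the relation $X_t^{o(t)}=\nu_t\in\CC^\times$ directly to get $\lambda_t^{o(t)}=\overline{\nu_t}/\nu_t$, hence $|\lambda_t|=1$. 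Your version is slightly more self-contained in that it does not rely on the prior construction of a real form; the paper's version is a touch quicker once that real form is already in hand, and it also makes explicit the pleasant fact that the antilinear automorphisms are parametrized by characters of $K$.
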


\begin{proof}
As we have seen above, the graded algebra $\cC$ admits a real form. Using a homogeneous basis $X_s$, $s\in K$, of one of these forms, any $\CC$-antilinear automorphism of $\cC$ as a graded algebra is given by $X_s\mapsto\chi(s)X_s$, where $\chi\in \Hom_\ZZ(K,\CC^\times)$. Now the square of this automorphism sends $X_s$ to $\lvert \chi(s)\rvert^2X_s=X_s$, for any $s\in K$.
\end{proof}

It follows from Lemma \ref{RCl1} that the set of fixed points of $\psi_0$ is a real form of the graded algebra $\cC$, which we will denote by $\cC_\RR$, and we pick the elements $X_s$, $s\in K$, to be in $\cC_\RR$, so that $\cC_\RR=\bigoplus_{s\in K}\RR X_s$ and $\psi_0(X_s)=X_s$. Pick nonzero $v_1\in\cV_e$ and $v_2\in \cV_{t_0}$, so $\cV_\id=v_1\cC$ and $\cV_\gamma=v_2\cC$. Relative to the $\cC$-basis $\{v_1,v_2\}$, we can represent $\psi_1$ by the matrix $\begin{bmatrix}0&c_1\\c_2&0\end{bmatrix}$. Here $c_2\in\cC_e=\CC$ and $c_1\in\cC_{t_0^2}$. Adjusting $v_2$, we may assume that $c_2=1$. Then $\psi^2_1$ is $\cC$-linear ($\psi_0^2=\id_\cC$) and represented by $\begin{bmatrix}c_1&0\\0&\psi_0(c_1)\end{bmatrix}$. Since $\psi(r)=\psi_1r\psi_1^{-1}$, we have  
\[
\psi^2=\id\quad\Leftrightarrow\quad\psi_0(c_1)=c_1\in Z(\cC).
\]
This condition is equivalent to $c_1\in\RR X_{t_0^2}$ and $t_0^2\in\rad\beta$. Here 
\[
\rad\beta\bydef\{s\in K\mid\beta(s,t)=1\text{ for all }t\in K\}.
\] 
Since $\beta$ takes values $\pm 1$, we always have $K^{[2]}\subset\rad\beta$, so we conclude
\[
T^{[2]}\subset\rad\beta.
\]
Let us compute the fixed points of $\psi$ using matrix representation of $\End_\cC(\cV)$ relative to the $\cC$-basis $\{v_1,v_2\}$. We have
\[
X=\begin{bmatrix}x_{11}&x_{12}\\x_{21}&x_{22}\end{bmatrix}\quad\Rightarrow\quad
\psi(X)=\begin{bmatrix}0&c_1\\1&0\end{bmatrix}
\begin{bmatrix}\psi_0(x_{11})&\psi_0(x_{12})\\\psi_0(x_{21})&\psi_0(x_{22})\end{bmatrix}
\begin{bmatrix}0&c_1\\1&0\end{bmatrix}^{-1}.
\]
Hence, $X$ satisfies $\psi(X)=X$ if and only if $x_{22}=\psi_0(x_{11})$ and $x_{21}=c_1^{-1}\psi_0(x_{12})$ (compare with the proof of Theorem \ref{GSt2}: $c_{\gamma,\id}=1$ and $c_{\gamma,\gamma}=c_1$). Thus, writing $c_1=\lambda^{-1} X_{t_0^2}$, for some $\lambda\in\RR^\times$, we see that the real form $\cD$ of $\End_\cC(\cV)$ determined by our Galois descent has the following basis:
\begin{equation}\label{eq:rf}
Y_s\bydef\begin{bmatrix}X_s&0\\0&X_s\end{bmatrix},\quad JY_s,\quad
Y_{t_0s}\bydef\begin{bmatrix}0&X_{t_0^2}X_s\\\lambda X_s&0\end{bmatrix},\quad JY_{t_0s}\quad (s\in K),
\end{equation}
where $J\bydef\begin{bmatrix}\bi&0\\0&-\bi\end{bmatrix}$, $\cD_e=\langle I,J\rangle_\RR\simeq\CC$ and $\cD_t=\cD_e Y_t$ for all $t\in T$. 

\begin{remark}\label{rem:square_central}
Since $X_{t_0^2}\in Z(\cC)$, we have $Y_t^2=\lambda\begin{bmatrix}X_{t_0^2}X_{t_0^{-1}t}^2&0\\0&X_{t_0^2}X_{t_0^{-1}t}^2\end{bmatrix}\in Z(\cD)$ for all $t\in T\smallsetminus K$. Also, $((aI+bJ)Y_t)^2=(a^2+b^2)Y_t^2$ for all $a,b\in\RR$, hence $d^2\in Z(\cD)$ for all $d\in\cD_t$ with $t\in T\smallsetminus K$.
\end{remark}

We will now investigate the isomorphism problem for these real forms. A general fact is that the forms determined by $\psi$ and $\psi'$ are isomorphic if and only if there exists $\vp\in\Aut_\CC^T(\End_\cC(\cV))$ such that $\psi\vp=\vp\psi$ (indeed, $\vp$ maps the set of fixed points of $\psi$ onto that of $\psi'$). In terms of the corresponding pairs, $(\psi_0,\psi_1)$ for $\psi$, $(\psi_0',\psi_1')$ for $\psi'$ and $(\vp_0,\vp_1)$ for $\vp$ (where $\vp_0$ is $\CC$-linear), this condition is tantamount to the existence of a nonzero homogeneous $c\in\cC$ such that $\psi_0'\vp_0=(\Int c)^{-1}\vp_0\psi_0$ and $\psi_1'\vp_1(v)=\vp_1\psi_1(v)c$ for all $v\in\cV$. Since $\psi_1$ and $\psi_1'$ are chosen to have degree $t_0$, we get $c\in \cC_e=\CC$ and hence $\psi_0'\vp_0=\vp_0\psi_0$, which implies that the real forms $\cC_\RR$ and $\cC'_\RR$ determined by $\psi_0$ and $\psi_0'$, respectively, are isomorphic: $\vp_0$ maps $\cC_\RR$ onto $\cC'_\RR$. This reduces the isomorphism problem to the case where  $\cC_\RR=\cC_\RR'$ and $\psi_0=\psi_0'$. In this case $\vp_0$ restricts to an automorphism of the graded algebra $\cC_\RR$, so $\vp_0(X_s)=\chi(s) X_s$ for some $\chi\in\Hom_\ZZ(K,\RR^\times)$. 

On the one hand, we have $\psi_1^2(v)=vc_1$, for all $v\in \cV$, and similarly, $(\psi_1')^2(v)=vc_1'$. On the other hand, since $\psi_1'$ is $\psi_0$-linear, applying $\psi_1'$ to both sides of the identity  $\psi_1'(v)=(\vp_1\psi_1\vp_1^{-1})(v)c$, we obtain 
\[
(\psi_1')^2(v) =(\vp_1\psi_1^2\vp_1^{-1})(v)d,\quad\text{where }
d=c\psi_0(c)=|c|^2,
\]
where the last equality holds because $c\in\CC$ and $\psi_0$ is $\CC$-antilinear.
By comparing, we get $c_1'=\vp_0(c_1)d$. Since $c_1=\lambda^{-1}X_{t_0^2}$ and $c_1'=(\lambda')^{-1}X_{t_0^2}$, we get the following relation between the parameters $\lambda$ and $\lambda'$:
\[
\lambda/\lambda'=\chi(t^2_0)d,\quad\text{where } d>0.
\]
Therefore, we have a dichotomy: either $\chi(t_0^2)=1$ for all $\chi\in\Hom_\ZZ(K,\RR^\times)$ and then the sign of parameter $\lambda$ is uniquely determined, or there exists $\chi\in\Hom_\ZZ(K,\RR^\times)$ such that $\chi(t_0^2)=-1$ and then the sign of $\lambda$ can be changed to make $\lambda>0$. It is easy to see that the first alternative of the dichotomy occurs if and only if $t_0^2\in K^{[2]}$, which is equivalent to $K$ being a direct summand of $T$.

Finally, observe that taking $\vp_0=\id_\cC$ and $\vp_1$ represented by $\begin{bmatrix}1&0\\0&\sqrt{|\lambda|}\end{bmatrix}^{-1}$ yields an automorphism $\vp $ of the graded algebra $\End_\cC(\cV)$ that maps the real form spanned by the elements \eqref{eq:rf} to the one spanned by the elements of the same form, but with $\lambda$ replaced by $\sgn(\lambda)$.

To summarize our classification, it is convenient to introduce the following notation for some special cases of the graded-division algebras $\cD(K,\beta,\mu)$ as in Theorem \ref{th:D_beta_mu}, which can be presented by generators and relations as in Proposition \ref{D_beta_mu}.

\begin{definition}
Let $T$ be a finite abelian group. 
\begin{enumerate}
\item[(i)] Given an alternating bicharacter $\beta:T\times T\to\CC^\times$, let $\cD(T,\beta)=\bigoplus_{t\in T}\cD_t$ be the graded-division algebra over $\CC$ with $1$-dimensional components and support $T$ that is characterized, up to isomorphism, by the commutation relations $xy=\beta(s,t)yx$, for all $x\in\cD_s$, $y\in\cD_t$ and $s,t\in T$.
\item[(ii)] Given an alternating bicharacter $\beta:T\times T\to\{\pm 1\}$ and a quadratic form $\mu:T_{[2]}\to\{\pm 1\}$ with polarization $\beta|_{T_{[2]}\times T_{[2]}}$ (recall Equation \eqref{eq:quad_form}), let $\cD(T,\beta,\mu)=\bigoplus_{t\in T}\cD_t$ be the graded-division algebra over $\RR$ with $1$-dimensional components and support $T$ that is characterized, up to isomorphism, by the properties  $xy=\beta(s,t)yx$, for all $x\in\cD_s$, $y\in\cD_t$ and $s,t\in T$, and $x^{o(t)}\in\mu(t^{o(t)/2})\RR_{>0}$, for all nonzero $t\in T$ with even $o(t)$.
\end{enumerate} 
\end{definition}

We have proved the following result:

\begin{theorem}\label{RCt1}
Any finite-dimensional graded-division algebra over $\RR$ with abelian support $T$ is graded-isomorphic to one of the following:
\begin{enumerate}
\item $\cD(T,\beta,\mu)\simeq\cD(\Tor_2(T),\beta,\mu)\ot\RR \Tor_{2'}(T)$ where $\beta:T\times T\to\{\pm 1\}$ is an alternating bicharacter (which is automatically trivial on $\Tor_{2'}(T)$) and $\mu:T_{[2]}\to \{\pm 1\}$ is a quadratic form with polarization $\beta|_{T_{[2]}\times T_{[2]}}$;
\item $\cD(T,\beta,\mu)\ot_\RR\HH$ where $\cD(T,\beta,\mu)$ is as in item (1) and $\HH$ is the quaternion algebra with trivial grading;
\item The real form $\cD=\bigoplus_{t\in T}\cD_t$ of $M_2(\cC)$ with $2$-dimensional components $\cD_t=\langle Y_t,JY_t\rangle_\RR$ where $\cC$ is the complexification of $\cD(K,\beta,\mu)=\bigoplus_{s\in K}\RR X_s$ as in item (1) with $K\subset T$ being a subgroup of index $2$, $T^{[2]}\subset\rad\beta$, $J=\diag(\bi,-\bi)$, and the elements $Y_t$, $t\in T$, defined as follows:
\begin{enumerate}
	\item[(a)] $Y_s=\begin{bmatrix}X_s&0\\0&X_s\end{bmatrix}$ and 
	$Y_{t_0s}=\begin{bmatrix}0&X_s\\ \delta X_s&0\end{bmatrix}$ for all $s\in K$, if $K$ is a direct summand of $T$, where $\delta\in\{\pm 1\}$ and $t_0$ is an arbitrarily chosen element of order $2$ in $T\smallsetminus K$;
	\item[(b)] $Y_s=\begin{bmatrix}X_s&0\\0&X_s\end{bmatrix}$ and 
	$Y_{t_0s}=\begin{bmatrix}0&X_{t_0^2}X_s\\ X_t&0\end{bmatrix}$ for all $s\in K$, if $K$ is not a direct summand of $T$, where $t_0$ is an arbitrarily chosen element in $T\smallsetminus K$;
\end{enumerate} 
\item[(4)] $\cD(T,\beta)$ where $\beta:T\times T\to\CC^\times$ is an alternating bicharacter. 
\end{enumerate}
Moreover, two algebras in different items are not graded-isomorphic to one another, nor two algebras in the same item if they have different parameters $K$, $\beta$, $\mu$ and $\delta$ (as applicable), except that, in item (4), $\beta$ and $\beta^{-1}$ give graded-isomorphic algebras over $\RR$ (but not over $\CC$ unless $\beta$ takes values in $\{\pm 1\}$). Finally, the parameters $\mu$ and $\delta$ in item (3) depend on the choice of $t_0$, and the previous assertion applies only if the same choice of $t_0$ has been made for both algebras.\qed 
\end{theorem}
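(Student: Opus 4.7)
The plan is to carry out the reduction CP1--CP3 of Section \ref{se:gen} explicitly over $\FF=\RR$, using the ingredients already assembled in the preceding discussion. First, given a finite-dimensional graded-division $\RR$-algebra $\cD$ with abelian support $T$, I pass to $\LL_0\bydef Z(\cD)_e$, over which $\cD$ becomes graded-central. Since $\LL_0$ is a subfield of the finite-dimensional division algebra $\cD_e$ over $\RR$, Frobenius gives $\LL_0\in\{\RR,\CC\}$. If $\LL_0=\CC$, then necessarily $\cD_e=\CC$ (the only finite-dimensional central division algebra over $\CC$) and $\cD$ has $1$-dimensional components over $\CC$, so Theorem \ref{th:D_beta_mu} applied with $\LL=\CC$ yields $\cD\simeq\cD(T,\beta)$ for some alternating bicharacter $\beta:T\times T\to\CC^\times$. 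This produces item (4).

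If instead $\LL_0=\RR$, then I apply Theorem \ref{GSt1} with $\LL\bydef Z(\cD_e)$, so again $\LL\in\{\RR,\CC\}$. When $\LL=\RR$ we have $K=T$, and $\cD\simeq\cD_e\ot_\RR\cC$ with $\cC=\mathrm{Cent}_\cD(\cD_e)$ a graded-division $\RR$-algebra with $1$-dimensional components; the CP2 analysis at $\LL=\RR$ identifies $\cC$ with some $\cD(T,\beta,\mu)$, producing items (1) and (2) according to whether $\cD_e=\RR$ or $\HH$. When $\LL=\CC$ we have $\cD_e=\CC$ and $[T:K]=2$; the algebra $\cE=\cD_K$ is a graded-division $\CC$-algebra $\cC$ with $1$-dimensional components. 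By Remark \ref{GSr1}, $\cC$ must be isomorphic to its complex conjugate, which forces $\beta$ to take values in $\{\pm 1\}$. The Galois descent datum is a single $\CC$-antilinear automorphism of $\End_\cC(\cV)$ of order $2$, and the detailed analysis preceding the theorem --- culminating in the matrix formulae \eqref{eq:rf} --- reduces this datum to the single sign $\delta\bydef\sgn(\lambda)$ and yields $T^{[2]}\subset\rad\beta$ as the necessary and sufficient condition for a descent to exist; this produces item (3).

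For the uniqueness statements I recover the parameters as intrinsic invariants of $\cD$. The four items are separated by the pair $(\dim_\RR\cD_e,\LL_0)$, which takes the four distinct values $(1,\RR)$, $(4,\RR)$, $(2,\RR)$, $(2,\CC)$. Within each item, $K$ is recovered as $\Supp(\mathrm{Cent}_\cD(\cD_e))$, and once $K$ is fixed the triple $(K,\beta,\mu)$ governing the graded $\LL$-algebra $\cE=\cD_K$ is determined up to graded-isomorphism by Theorem \ref{th:D_beta_mu}. In item (4), any graded $\RR$-algebra isomorphism $\cD(T,\beta)\to\cD(T,\beta')$ restricts to an automorphism of $\LL_0=\CC$ --- either the identity, giving $\beta=\beta'$, or complex conjugation, giving $\beta=(\beta')^{-1}$ --- and both options are realized, so these are exactly the coincidences.

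The main obstacle, which appears in item (3), is to verify that $\delta$ is genuinely determined by $\cD$ once $t_0\in T\smallsetminus K$ is fixed, and that it can be altered precisely when $K$ is not a direct summand of $T$. This is exactly the content of the dichotomy $\lambda/\lambda'=\chi(t_0^2)d$ with $d>0$ established in the discussion before the theorem: the sign of $\lambda$ is rigid whenever every character $\chi\in\Hom_\ZZ(K,\RR^\times)$ satisfies $\chi(t_0^2)=1$, which is tantamount to $t_0^2\in K^{[2]}$, i.e.\ to $K$ being a direct summand of $T$. Once that relation is tracked through together with the dependence of $\mu$ on the choice of $t_0$, the remaining uniqueness assertion in item (3) follows.
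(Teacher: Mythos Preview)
Your proposal is correct and mirrors the paper's own argument: the theorem carries a \qed\ because its proof \emph{is} the preceding discussion of Section~\ref{se:real}, which executes CP1--CP3 over $\RR$ exactly as you outline, including the dichotomy $\lambda/\lambda'=\chi(t_0^2)d$ with $d>0$ that decides whether $\delta$ is an invariant. One point in your uniqueness argument for item (3) deserves care: the parameter $\mu$ is \emph{not} recovered from the graded $\CC$-algebra $\cE=\cD_K$ via Theorem~\ref{th:D_beta_mu} (over $\CC$ the invariant $\mu$ is trivial), but rather from the real form $\cC_\RR$ fixed by $\psi_0$; the paper shows that the graded-isomorphism class of $\cC_\RR$---hence $\mu$---is an invariant of $\cD$ (for a fixed $t_0$) when it reduces the isomorphism problem to the case $\psi_0=\psi_0'$ and $\cC_\RR=\cC'_\RR$.
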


As stated above, the parameters $\mu$ and $\delta$ defining the same (graded-isomorphism class of) $\cD$ in item (3) depend on the choice of $t_0\in T\smallsetminus K$, so we will denote them $\mu_{t_0}$ and $\delta_{t_0}$. Let us investigate how they change if we replace $t_0$ by $t_0'=t_0g$, $g\in K$. 

Note that $\beta$ does not change, because it is an invariant of $\cC$ (which is itself an invariant of $\cD$, since $\cD\ot_\RR\CC\simeq\End_\cC(\cV)$ and also $\cC\simeq\cD_K$). On the other hand, the real form $\cC_\RR$ will change to $\cC'_\RR$. Indeed, $\psi_0$ is replaced by $(\Int X_g)^{-1}\psi_0$, so $\cC'_\RR$ is spanned by the elements $X'_h=X_h$, for all $h\in K$ with $\beta(g,h)=1$, and $X'_h=\bi X_h$, for all $h\in K$ with $\beta(g,h)=-1$. Therefore, we have 
\begin{equation}\label{eq:change_mu}
\mu_{t_0 g}(h)=\mu_{t_0}(h)\beta(g,h),\quad\text{for all }t_0\in T\smallsetminus K,\, g\in K,\text{ and }h\in K_{[2]}.
\end{equation}
If $K$ is a direct summand of $T$, we restricted the choice of $t_0$ to elements of order $2$, i.e., $t_0\in T_{[2]}\smallsetminus K_{[2]}$, so we consider only $g\in K_{[2]}$ in this case. From Remark \ref{rem:square_central} (with $\lambda=\delta_{t_0}$), it follows that 
\begin{equation}\label{eq:invariant_def_nu}
d^2\in\delta_{t_0}\mu_{t_0}(t_0^{-1}t)\RR_{>0},\quad\text{for all }0\ne d\in\cD_t\text{ and }t_0,t\in T_{[2]}\smallsetminus K_{[2]} . 
\end{equation}
In particular, $d^2\in\delta_{t_0}\RR_{>0}$ for all $0\ne d\in\cD_{t_0}$. Applying this to $t_0'=t_0 g$ and $\delta_{t_0'}$, and comparing with what \eqref{eq:invariant_def_nu} gives for $t=t_0'$, we conclude that
\begin{equation}\label{eq:change_delta}
\delta_{t_0 g}=\delta_{t_0}\mu_{t_0}(g),\quad\text{for all }t_0\in T_{[2]}\smallsetminus K_{[2]}\text{ and }g\in K_{[2]}.
\end{equation}
Using the transition formulas \eqref{eq:change_mu} and \eqref{eq:change_delta}, it is possible to compare graded-division algebras in item (3) of Theorem \ref{RCt1} even when they are presented using different elements $t_0$.

In order to state a parametrization that does not involve the choice of $t_0$, we introduce the following generalization of ``nice maps'' from \cite{R}:

\begin{definition}\label{df:nice}
Let $T$ be a finite abelian group, $K\subset T$ be a subgroup of index $2$, and $\beta:K\times K\to\{\pm 1\}$ be an alternating bicharacter. A map $\nu:T\smallsetminus K\to\{\pm 1\}$ will be called \emph{$(T,K,\beta)$-admissible} if 
\begin{equation}\label{eq:def_nice}
\frac{\nu(tgh)\nu(t)}{\nu(tg)\nu(th)}=\beta(g,h),\quad\text{for all }t\in T\smallsetminus K,\, g\in K,\text{ and }h\in K_{[2]}.
\end{equation}
(Of course, since the values are $\pm 1$, division above is the same as multiplication, but it makes the formula look more intuitive.)
We define the following equivalence relation on the set of $(T,K,\beta)$-admissible maps: $\nu\sim\nu'$ if and only if the map $t\mapsto\nu'(t)/\nu(t)$ is constant on each $K_{[2]}$-coset in $T\smallsetminus K$.
\end{definition}

Note that, for any $t\in T$, the map
\[
\nu_t: K_{[2]}\to\{\pm 1 \},\;h\mapsto\frac{\nu(th)}{\nu(t)}
\]
is the same for all $\nu$ within an equivalence class, and Equation \eqref{eq:def_nice} implies (by restricting $g$ to $K_{[2]}$) that $\nu_t$ satisfies Equation \eqref{eq:quad_form}, i.e., $\nu_t$ is a quadratic form with polarization $\beta|_{K_{[2]}\times K_{[2]}}$. Moreover, Equation \eqref{eq:def_nice} is equivalent to the following, for some, and hence all, $t\in T\smallsetminus K$:
\begin{equation}\label{eq:other_def_nice}
\nu_{tg}(h)=\nu_t(h)\beta(g,h),\quad\text{for all }g\in K\text{ and }h\in K_{[2]}.
\end{equation}
Now we can reformulate the parametrization of Theorem \ref{RCt1} (cf. \cite[Theorem 23]{R}):

\begin{corollary}
Let $G$ be an abelian group. The set of isomorphism classes of finite-dimensional $G$-graded algebras $\cD$ over $\RR$ that are graded-division algebras is in bijection with the disjoint union of the following four sets:

\noindent $\boxed{\cD_e=\RR}$ Triples $(T,\beta,\mu)$ where $T$ is a finite subgroup of $G$, $\beta:T\times T\to\{\pm 1\}$ is an alternating bicharacter, and $\mu:T_{[2]}\to\{\pm 1\}$ is a quadratic form with polarization $\beta|_{T_{[2]}\times T_{[2]}}$;

\noindent $\boxed{\cD_e=\HH}$ Triples $(T,\beta,\mu)$ as above;

\noindent $\boxed{\CC\simeq \cD_e\not\subset Z(\cD)}$ Quadruples $(T,K,\beta,\nu)$ where $K\subset T$ are finite subgroups of $G$ with $[T:K]=2$, $\beta:K\times K\to\{\pm 1\}$ is an alternating bicharacter with $T^{[2]}\subset\rad\beta$, and $\nu$ is as follows:
\begin{enumerate}
	\item[(a)] If $K$ is a direct summand on $T$, then $\nu$ is a $(T_{[2]},K_{[2]},\beta_{[2]})$-admissible map (see Definition \ref{df:nice}) where $\beta_{[2]}\bydef\beta|_{K_{[2]}\times K_{[2]}}$;
	\item[(b)] If $K$ is not a direct summand of $T$, then $\nu$ is an equivalence class of $(T,K,\beta)$-admissible maps;
\end{enumerate}

\noindent $\boxed{\CC\simeq \cD_e\subset Z(\cD)}$ Pairs $(T,\{\beta,\beta^{-1}\})$ where $T$ is a finite subgroup of $G$ and $\beta:T\times T\to\CC^\times$ is an alternating bicharacter.
\end{corollary}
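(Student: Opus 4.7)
The plan is to restate Theorem \ref{RCt1} using invariants of $\cD$ that are intrinsic, that is, independent of the choice of transversal element $t_0$ in item (3). Given a $G$-graded graded-division algebra $\cD$, its support $T := \Supp(\cD)$ is a finite subgroup of $G$, and its $G$-graded isomorphism class coincides with its $T$-graded isomorphism class, explaining why each case of the corollary records $T$ as a subgroup of $G$. The four cases of the corollary are distinguished by the isomorphism type of $\cD_e$ together with whether $\cD_e\subset Z(\cD)$, so they are pairwise disjoint and correspond one-to-one with items (1)--(4) of Theorem \ref{RCt1}.

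For the cases $\cD_e\simeq\RR$, $\cD_e\simeq\HH$, and $\CC\simeq\cD_e\subset Z(\cD)$, the parameters in the corollary are precisely the invariants listed in Theorem \ref{RCt1}(1), (2), (4): the bicharacter $\beta$ is recovered from commutation relations among homogeneous elements, $\mu$ from their $o(t)$-th powers, and in the last of these three cases the parameter is $\{\beta,\beta^{-1}\}$ because Theorem \ref{RCt1}(4) already records this as the sole ambiguity. No further argument is needed.

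The substantive step is the case $\CC\simeq\cD_e\not\subset Z(\cD)$, matching Theorem \ref{RCt1}(3), where the data $(K,\beta,\mu_{t_0},\delta_{t_0})$ depends on $t_0\in T\setminus K$ and transforms under changes of $t_0$ by Equations \eqref{eq:change_mu} and \eqref{eq:change_delta}. In subcase (a), where $K$ is a direct summand of $T$, equivalently $T_{[2]}\setminus K_{[2]}\ne\emptyset$, I would define $\nu : T_{[2]}\setminus K_{[2]} \to \{\pm 1\}$ by letting $\nu(t)$ be the sign of $d^2\in\RR$ for any nonzero $d\in\cD_t$; this is well-defined by Remark \ref{rem:square_central}, and Equation \eqref{eq:invariant_def_nu} gives $\nu(t)=\delta_{t_0}\mu_{t_0}(t_0^{-1}t)$, so $\nu$ determines $\delta_{t_0}$ as $\nu(t_0)$ and $\mu_{t_0}$ as $\nu(t_0\cdot)/\nu(t_0)$. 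The quadratic-form condition \eqref{eq:quad_form} on $\mu_{t_0}$ then translates into $\nu$ being $(T_{[2]},K_{[2]},\beta_{[2]})$-admissible. In subcase (b), $T_{[2]}=K_{[2]}$ forces $\delta_{t_0}$ to be normalizable to $+1$, leaving only $\mu_{t_0}$ as data for any choice of $t_0$; one then defines an admissible $\nu : T\setminus K\to\{\pm 1\}$ by picking representatives of the $K_{[2]}$-cosets in $T\setminus K$, assigning $\nu(t_0 h) := \mu_{t_0}(h)$ on each chosen coset, and choosing arbitrary signs elsewhere. Equation \eqref{eq:change_mu} is equivalent to the relation \eqref{eq:other_def_nice} linking different cosets, hence to the full admissibility \eqref{eq:def_nice}, and the equivalence relation of Definition \ref{df:nice} records exactly the remaining freedom in these choices.

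The main obstacle is the bookkeeping in subcase (b): one must verify that the equivalence relation on admissible $\nu$'s corresponds exactly to the orbit of the data $\mu_{t_0}$ under the transition formulas induced by all changes $t_0\mapsto t_0 g$ with $g\in K$. Once this is in hand, the uniqueness clauses of Theorem \ref{RCt1} yield the claimed bijection.
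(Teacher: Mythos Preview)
Your proposal is correct and follows essentially the same route as the paper's proof. In particular, both arguments note that only the case $\CC\simeq\cD_e\not\subset Z(\cD)$ needs work; in subcase~(a) both define $\nu(t)=\delta_{t_0}\mu_{t_0}(t_0^{-1}t)$ (you motivate it intrinsically via the sign of $d^2$, the paper writes the formula first and then invokes \eqref{eq:invariant_def_nu} to see it is intrinsic), and in subcase~(b) both build $\nu$ on $K_{[2]}$-cosets with free sign choices $\delta_i$ and verify that $\nu_t=\mu_t$, so that admissibility is exactly \eqref{eq:change_mu} rewritten as \eqref{eq:other_def_nice} and the equivalence class absorbs the $\delta_i$.
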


\begin{proof}
Only the third case requires proof. We now explain how $\mu$ and $\delta$ (if applicable) are related to $\nu$. 

\noindent (a) Suppose $K$ is a direct summand of $T$. Given $\mu=\mu_{t_0}$ and $\delta=\delta_{t_0}$ corresponding to an element $t_0\in T_{[2]}\smallsetminus K_{[2]}$, we define $\nu$ as follows:
\[
\nu(t)\bydef\delta\mu(t_0^{-1}t),\quad\text{for all }t\in T_{[2]}\smallsetminus K_{[2]}.
\]
The fact that $\mu$ is a quadratic form with polarization $\beta_{[2]}$ immediately implies that $\nu$ is a $(T_{[2]},K_{[2]},\beta_{[2]})$-admissible map. Equations \eqref{eq:change_mu} and \eqref{eq:change_delta} can be used to verify that $\nu$ does not depend on the choice of $t_0$, but a better way is to observe that, in view of Equation \eqref{eq:invariant_def_nu}, we have $d^2\in\nu(t)\RR_{>0}$ for all $0\ne d\in\cD_t$. Conversely, given $\nu$ and $t_0$, we define $\mu\bydef\nu_{t_0}$ and $\delta\bydef\nu(t_0)$.

\noindent (b) Suppose $K$ is not a direct summand of $T$. Let $t_1,\ldots,t_\ell$ be representatives of the $K_{[2]}$-cosets contained in $T\smallsetminus K$, where $\ell=[K:K_{[2]}]$. Given a family $\mu_t$, $t\in T\smallsetminus K$, of quadratic forms with polarization $\beta_{[2]}$ and satisfying Equation \eqref{eq:change_mu}, pick $\delta_i\in\{\pm 1\}$ for $i=1,\ldots,\ell$ and define
\[
\nu(t)\bydef\delta_i\mu_{t_i}(t_i^{-1}t),\quad\text{for all }t\in t_i K_{[2]}.
\]
This defines a map $\nu:T\smallsetminus K\to\{\pm 1\}$. For any $t\in T\smallsetminus K$, we can write $t=t_i g$ for some $i$ and $g\in K_{[2]}$, hence
\[
\nu_t(h)=\frac{\nu(t_igh)}{\nu(t_ig)}=\frac{\delta_i\mu_{t_i}(gh)}{\delta_i\mu_{t_i}(g)}=\mu_{t_i}(h)\beta(g,h)=\mu_{t_i g}(h)=\mu_t(h),\quad\text{for all }h\in K_{[2]}.
\]
We have shown that $\nu_t=\mu_t$ for all $t\in T\smallsetminus K$, so Equation \eqref{eq:change_mu} becomes Equation \eqref{eq:other_def_nice}, which proves that $\nu$ is a $(T,K,\beta)$-admissible map. Clearly, by varying the $\delta_i$ we fill in an equivalence class of $(T,K,\beta)$-admissible maps. Conversely, given a $(T,K,\beta)$-admissible map $\nu$, we define $\mu_t\bydef\nu_t$, which is a family of quadratic forms with polarization $\beta_{[2]}$ and satisfying Equation \eqref{eq:change_mu}.
\end{proof}

We conclude this section by sketching an alternative approach to classifying graded-division algebras $\cD$ in item (3) of Theorem \ref{RCt1} (without Galois descent). We have $\cD_e\simeq\CC$ and $\cD_e$ is not central, so the support $K$ of $\cC\bydef\mathrm{Cent}_\cD(\cD_e)$ is a subgroup of index $2$ in $T$. Moreover, $\cC=\cD_K$. We have seen in Remark \ref{rem:square_central} that $d^2\in Z(\cD)$ for all $d\in\cD_t$ with $t\in T\smallsetminus K$, but this can be proved directly (see e.g. \cite[Remark 21]{R}). As before, fix $t_0\in T\smallsetminus K$ and pick nonzero $d_0\in\cD_{t_0}$. Then the automorphism $\Int d_0$ of the graded algebra $\cD$ is involutive, so it determines a $\ZZ_2$-grading $\cD=\cD_{\bar{0}}\oplus\cD_{\bar{1}}$, which is compatible with the $T$-grading. Therefore, we get a refined grading on $\cD$ by $T\times\ZZ_2$, whose homogeneous components 
\[
\cD_{(t,i)}\bydef\{d\in\cD_t\mid d_0 d d_0^{-1}=(-1)^i d\},\quad\text{for all }t\in T\text{ and } i\in\ZZ_2,
\]
are $1$-dimensional. Hence, the $T$-graded subalgebra $\cD_{\bar{0}}=\mathrm{Cent}_\cD(d_0)$ has $1$-dimen\-sional components and support $T$, i.e., it falls in item (1) of Theorem \ref{RCt1}. The graded-isomorphism class of $\cD_{\bar{0}}$ depends only on $t_0$, i.e., it does not depend on the choice of $0\ne d_0\in\cD_{t_0}$. Indeed, for any $0\ne c\in\cD_e\simeq\CC$, an isomorphism $\mathrm{Cent}_\cD(d_0)\to \mathrm{Cent}_\cD(cd_0)$ is given on homogeneous elements by $x\mapsto x$ if $\deg(x)\in K$ and $x\mapsto \frac{d}{|d|}x$ otherwise. The graded algebra $\cD$ can be recovered as $\cD_e\wh{\ot}\cD_{\bar{0}}$, where $\wh{\ot}$ denotes the tensor product in the category of $\RR$-superalgebras and we take the superalgebra structures $\cD_e=\RR\,1\oplus\RR\bi$ and $\cD_{\bar{0}}=(\cD_{\bar{0}})_K\oplus (\cD_{\bar{0}})_{T\smallsetminus K}$. The $T$-grading on the tensor product comes from the $T$-grading of $\cD_{\bar{0}}$ (since $\cD_e$ has a trivial $T$-grading). Note that the tensor product of superlagebras is again a superalgebra, but we forget about the superalgebra structure (which would be $(\cD_{\bar{0}})_K\oplus\bi(\cD_{\bar{0}})_{T\smallsetminus K}$ for the even part of $\cD$ and $(\cD_{\bar{0}})_{T\smallsetminus K}\oplus\bi(\cD_{\bar{0}})_{K}$ for the odd part).
The correspondence with our approach through Galois descent is the following: $\cC$ is the same (up to graded-isomorphism) and $\cC_\RR=(\cD_{\bar{0}})_K$.

\section{Gradings of fields}\label{se:fields}

\subsection{Graded field extensions}

\begin{definition}\label{dNSGF}
Let $\KK/\FF$ be a field extension and let $G$ be a group. We say that $\KK$ is a \emph{$G$-graded extension of $\FF$} if the additive group of $\KK$ is endowed with a direct sum decomposition
\begin{equation}\label{eGFE}
\KK=\bigoplus_{g\in G} \KK_g,
\end{equation}
where $\KK_g\ne 0$, $\KK_g\KK_h\subset \KK_{gh}$, for all $g,h\in G$, and $\KK_e=\FF$.
\end{definition}

In other words, the field $\KK$ is a $G$-graded $\FF$-algebra with $1$-dimensional homogeneous components and support $G$. We will show in a moment that this implies that $\KK/\FF$ is algebraic, but note that it need not be normal or separable. For example, $\QQ[\sqrt[3]{2}]/\QQ$ is $\ZZ_3$-graded and $\FF(\sqrt[p]{X})/\FF(X)$, with $\chr\FF=p$, is $\ZZ_p$-graded.

\begin{remark}
A finite Galois field extension $\KK/\FF$ can be considered as Hopf-Galois for the Hopf algebra $\FF^\Gamma$ of $\FF$-valued functions on $\Gamma=\Gal(\KK/\FF)$ (for which $\KK$ is a right comodule algebra) --- see e.g. \cite[Chapter 8]{Mont}. A field extension $\KK/\FF$ is a $G$-graded extension in the above sense if and only if it is Hopf-Galois for the group algebra $\FF G$ (for which $\KK$ is a comodule algebra by means of its $G$-grading).
\end{remark}

A more general object is a commutative graded-division algebra, which we will call a \textit{graded-field}. The support of a graded-field is an abelian subgroup of $G$, so we will assume that $G$ is equal to the support and, hence, is abelian. Also, the identity component is a field extension of $\FF$. In this section, we are interested in the following questions: 
\begin{enumerate}
	\item Which field extensions can be made graded field extensions?
	\item Which graded-fields are fields?
\end{enumerate}

The following result gives some restrictions on the groups that can be the support of a graded field extension. 

\begin{proposition}\label{psgf1n} 
Let $\KK/\FF$ be a $G$-graded field extension. Then $G$ is a torsion abelian group and there is a subgroup $M$ in the multiplicative group $\KK^\times$ such that $\FF^\times\subset M$ and $G\simeq M/\FF^\times$. In particular, 
	\begin{enumerate}
		\item[(i)] If $\KK$ is a finite field, then $G$ is a cyclic group;
		\item[(ii)] $\KK/\FF$ is an algebraic extension.
	\end{enumerate}
\end{proposition}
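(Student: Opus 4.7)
My plan is to first build the subgroup $M$ by collecting all nonzero homogeneous elements of $\KK$, namely $M\bydef\bigcup_{g\in G}(\KK_g\setminus\{0\})$. Since $\KK$ is a field, every such element is invertible, and a short computation equating graded components in $xx^{-1}=1$ shows that the inverse of a homogeneous element of degree $g$ is again homogeneous (of degree $g^{-1}$); hence $M$ is a subgroup of $\KK^\times$. The degree map $M\to G$ is then a surjective group homomorphism with kernel $\KK_e^\times=\FF^\times$, yielding the isomorphism $G\simeq M/\FF^\times$. Commutativity of $\KK$ forces $G$ to be abelian: for nonzero $x\in\KK_g$ and $y\in\KK_h$, the product $xy=yx$ is a nonzero element of both $\KK_{gh}$ and $\KK_{hg}$, which are distinct summands of a direct sum unless $gh=hg$.

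The heart of the argument is showing that $G$ is torsion. Suppose for contradiction that some $g\in G$ has infinite order, and pick $0\neq x\in\KK_g$. Since $1\in\KK_e$ and $x\in\KK_g$ lie in distinct summands, $1+x\neq 0$, so $y\bydef(1+x)^{-1}$ exists in $\KK$ and admits a finite homogeneous decomposition $y=\sum_h y_h$. Expanding $(1+x)y=1$ and extracting the degree-$h$ component yields the relations $y_h=-xy_{g^{-1}h}$ for every $h\neq e$, together with $y_e+xy_{g^{-1}}=1$. The last equation forces at least one of $y_e$ and $y_{g^{-1}}$ to be nonzero. If $y_e\neq 0$, iterating the recursion forward gives $y_{g^k}=(-x)^k y_e\neq 0$ for every $k\ge 0$; if instead $y_e=0$, then $y_{g^{-1}}=x^{-1}\neq 0$, and iterating backward gives $y_{g^{-k}}\neq 0$ for every $k\ge 1$. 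Since $g$ has infinite order, the powers $g^k$ (resp.\ $g^{-k}$) are pairwise distinct, so either chain produces infinitely many nonzero homogeneous components of $y$, contradicting the finiteness of its support.

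Once $G$ is known to be torsion, part (ii) follows at once: any $x\in\KK_g$ satisfies $x^{o(g)}\in\KK_e=\FF$, so $x$ is a root of the polynomial $t^{o(g)}-x^{o(g)}\in\FF[t]$; since $\KK$ is spanned over $\FF$ by homogeneous elements and algebraic elements form a subfield, $\KK/\FF$ is algebraic. For (i), if $\KK$ is finite then the multiplicative group $\KK^\times$ is cyclic, so its subgroup $M$ is cyclic, and therefore so is the quotient $G\simeq M/\FF^\times$.

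The main obstacle I anticipate is the torsion step: one has to convert the algebraic hypothesis ``$g$ has infinite order'' into a combinatorial impossibility. The trick is that the very existence of $(1+x)^{-1}$ inside the field $\KK$, combined with the rigid recursion coming from the graded decomposition of $(1+x)y=1$, forces an unbounded chain of nonzero homogeneous components — which is incompatible with the fact that every element of $\KK$ has only finitely many nonzero homogeneous parts.
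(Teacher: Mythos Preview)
Your proof is correct and parallels the paper's closely: the construction of $M$ as the set of nonzero homogeneous elements, the identification $G\simeq M/\FF^\times$ via the degree map, and the deductions of (i) and (ii) are the same as in the paper.

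The only genuine difference is in the torsion step. The paper proves a separate lemma --- that if $H\le G$ and $x\in\cA_H$ is invertible in a graded algebra $\cA$, then $x^{-1}\in\cA_H$ --- and applies it with $H=\langle g\rangle$ to conclude that $\KK_{\langle g\rangle}$ is a subfield of $\KK$; since $\KK_{\langle g\rangle}$ is visibly a Laurent polynomial ring when $g$ has infinite order, this is a contradiction. Your argument unpacks precisely the obstruction hidden in that last sentence: you exhibit the specific element $1+x$ whose inverse cannot have finite homogeneous support, via the explicit recursion $y_h=-x\,y_{g^{-1}h}$. The paper's route is slightly more conceptual and produces a reusable lemma (invoked again later for the $p$-primary decomposition of graded-fields); yours is self-contained and avoids the extra abstraction. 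Both are perfectly valid.
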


\begin{proof}
Assume, to the contrary, that there is an element $g\in G$ of infinite order and let $0\ne x\in\KK_g$. Then all powers $x^n$, $n\in\ZZ$, have different degrees and are therefore linearly independent over $\FF=\KK_e$. This means that $\KK_{\langle g\rangle}$ is isomorphic to the ring of Laurent polynomials over $\FF$. On the other hand, Lemma \ref{lm:inverse}, below, implies that $\KK_{\langle g\rangle}$ must be a subfield of $\KK$. This is a contradiction. 

Let $M=\KK^\times_\mathrm{gr}$, the group of nonzero homogeneous elements of $\KK$. Clearly, it is a subgroup of $\KK^\times$, and   the fact $M/\FF^\times\simeq G$ is just a special case of \eqref{eq:short_exact}. If $\KK$ is a finite field then $\KK^\times$ is cyclic, so $G$ must be cyclic. 

Let $x\in\KK_g$ for some $g\in G$. As we have shown, the order of $g$ is finite, so $x^{o(g)}$ is in $\FF$, which implies that $x$ is algebraic over $\FF$. Since $\KK$ is spanned by homogeneous elements, $\KK/\FF$ is an algebraic extension.  
\end{proof}

\begin{lemma}\label{lm:inverse}
Let $G$ be a group and let $\cA$ be a unital $G$-graded algebra. If $H$ is a subgroup of $G$ and an element $x\in\cA_H$ is invertible in $\cA$ then $x^{-1}\in\cA_H$. 
\end{lemma}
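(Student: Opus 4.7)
My plan is to prove this by projecting the identity $xx^{-1}=1$ onto the right cosets of $H$ in $G$ and exploiting the fact that $x$ lies entirely in $\cA_H$.

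Concretely, I would start by writing $x^{-1}=\sum_{g\in G}y_g$ with $y_g\in\cA_g$ (a finite sum). Let $\{C_i\}_{i\in I}$ be the set of right cosets of $H$ in $G$, with $C_0=H$, and regroup the decomposition as $x^{-1}=\sum_{i\in I}w_i$, where $w_i\bydef\sum_{g\in C_i}y_g\in\cA_{C_i}$. The key observation is that, because $x\in\cA_H$, left multiplication by $x$ maps $\cA_{C_i}$ into $\cA_{HC_i}=\cA_{C_i}$ (since $HC_i=C_i$ for a right coset). Hence $xw_i\in\cA_{C_i}$ for every $i$.

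Now $1=xx^{-1}=\sum_{i\in I}xw_i$ is a decomposition of $1$ into components lying in the pairwise-disjoint graded subspaces $\cA_{C_i}$. Since $1\in\cA_e\subset\cA_H=\cA_{C_0}$, uniqueness of such a decomposition forces $xw_0=1$ and $xw_i=0$ for all $i\ne 0$. Multiplying the equation $xw_i=0$ on the left by $x^{-1}$ (using associativity) yields $w_i=0$ for $i\ne 0$, so $x^{-1}=w_0\in\cA_H$, as desired.

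I do not foresee a real obstacle here: the only subtle point is choosing \emph{right} cosets rather than left cosets, which is what makes the invariance $H\cdot C_i=C_i$ work and allows $x\in\cA_H$ to preserve each graded piece $\cA_{C_i}$. Everything else is formal manipulation of the grading, and the argument makes no use of commutativity, finite dimensionality, or finiteness of $H$ or $G$.
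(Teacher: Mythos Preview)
Your argument is correct and is essentially the same as the paper's proof: the paper partitions $x^{-1}$ by \emph{left} $H$-cosets and multiplies on the right ($1=x^{-1}x$), while you partition by \emph{right} cosets and multiply on the left ($1=xx^{-1}$), which is the mirror-image of the same idea. The only cosmetic difference is that the paper concludes $x^{-1}=y_i$ directly from $y_ix=1$ (any one-sided inverse of an invertible element equals the inverse), whereas you cancel $x$ in $xw_i=0$ to kill the other pieces first; both are equally valid.
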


\begin{proof}
Let us partition the homogeneous components of $x^{-1}$ into left $H$-cosets: $x^{-1}=\sum_{i=1}^\ell y_i$ where $y_i\in\cA_{S_i}$ and $S_1,\ldots,S_\ell$ are distinct left $H$-cosets. Then $1=x^{-1}x=\sum_{i=1}^\ell y_i x$ and, since $x\in\cA_H$ and $SH=S$ for any left $H$-coset $S$, we see that $y_i\in S_i$ for all $i$. But $1\in\cA_e\subset\cA_H$, so we conclude that there exists $i$ such that $S_i=H$, and $y_j x=0$ for all $j\ne i$. This means $1=y_i x$ and hence $x^{-1}=y_i\in\cA_H$, as desired.
\end{proof}

\begin{proposition}\label{pSylow}
Let $\KK$ be a graded-field with support $G$ and let $\LL=\KK_e$. Then $\KK$ is a field if and only if $G$ is a torsion (abelian) group and, for every prime number $p$, the subalgebra $\KK(p)\bydef\KK_{\Tor_p(G)}$ is a field. 
\end{proposition}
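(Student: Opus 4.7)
The forward direction is immediate from results already established. If $\KK$ is a field, then $\KK/\LL$ is a $G$-graded field extension in the sense of Definition~\ref{dNSGF}, so Proposition~\ref{psgf1n} gives that $G$ is torsion. For each prime $p$, the set $\Tor_p(G)$ is a subgroup of $G$, so Lemma~\ref{lm:inverse} shows that any nonzero element of $\KK(p)$, being invertible in $\KK$, has its inverse again in $\KK(p)$; hence $\KK(p)$ is a subfield of $\KK$.

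For the converse, my plan is to reduce to finite subgroups and exploit the primary decomposition. Since $G$ is torsion abelian, the (finite) support of any finite collection of elements of $\KK$ generates a finite subgroup, so it suffices to show that $\KK_F$ is a field for every finite subgroup $F\subset G$: indeed, $\KK$ will then have no zero divisors, and every nonzero element will be invertible in some such $\KK_F$. Writing $F=\prod_p F_p$ for the primary decomposition, I first observe that each $\KK_{F_p}$ is a finite-dimensional $\LL$-subalgebra of the field $\KK(p)$, hence a commutative integral domain finite over $\LL$, hence itself a field. Next, since $\KK$ is a graded-division algebra, the product of nonzero homogeneous elements taken from the various $\KK_{F_p}$ is nonzero, which shows that the multiplication map $\bigotimes_p\KK_{F_p}\to\KK_F$ is surjective onto each $1$-dimensional component $\KK_f$, $f\in F$; since both sides have $\LL$-dimension $|F|$, this map is an isomorphism of $\LL$-algebras.

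The crux of the argument, and the step I expect to be the only real obstacle, is to show that the tensor product $C\bydef\bigotimes_p\KK_{F_p}$ is itself a field. The plan is a standard dimension count that hinges on the coprimality of the degrees $[\KK_{F_p}:\LL]=|F_p|$ across distinct primes $p$: for any maximal ideal $M\subset C$, the quotient $C/M$ is a finite field extension of $\LL$, and each $\KK_{F_p}$ embeds into $C/M$ (a nonzero ring homomorphism out of a field is injective), so $[C/M:\LL]$ must be divisible by every $|F_p|$, and hence by $\prod_p|F_p|=|F|=\dim_\LL C$. This forces $M=0$, so $C$, and therefore $\KK_F$, is a field, which completes the proof via the reduction above.
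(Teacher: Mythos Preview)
Your proof is correct and follows essentially the same approach as the paper's: reduce to finite subgroups, identify $\KK_F$ with the tensor product of its primary pieces, and use coprimality of the degrees $[\KK_{F_p}:\LL]=|F_p|$ to force the tensor product to be a field. The only cosmetic difference is in the last step: the paper embeds each $\KK_{F_p}$ into $\overline{\LL}$ and compares dimensions with the composite $\KK_{F_{p_1}}\cdots\KK_{F_{p_m}}$, whereas you quotient by a maximal ideal of the tensor product and run the same divisibility argument there---these are equivalent formulations of the same dimension count.
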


\begin{proof}
The necessity of these conditions follows from Proposition \ref{psgf1n} and Lemma \ref{lm:inverse}. Let us prove the sufficiency. Let $x$ be a nonzero element of $\KK$. Since $x$ has finitely many nonzero components and every element of $G$ has a finite order, we see that $x\in\KK_H$ where $H$ is a finite subgroup of $G$. So, it is sufficient to prove that $\KK_H$ is a field. We can write $H$ as the direct product of its primary components (Sylow subgroups): $H=H_1\times\cdots\times H_m$ where $H_i\subset\Tor_{p_i}(G)$ and $p_1,\ldots,p_m$ are distinct primes. Then $\KK_H$ can be identified with the tensor product $\KK_1\ot_\LL\cdots\ot_\LL \KK_m$ where $\KK_i=\KK_{H_i}$. Since $\KK(p_i)$ is a field, Lemma \ref{lm:inverse} implies that $\KK_i$ is also a field. It is a finite extension of $\LL$ of degree $p_i^{k_i}$ for some $k_i$. Let us embed all $\KK_i$ over $\LL$ in the algebraic closure $\overline{\LL}$ and consider the composite $\wt{\KK}=\KK_1\cdots \KK_m$ in $\overline{\LL}$. Since $[\KK_i:\LL]$, $i=1,\ldots,m$ are pairwise coprime divisors of $[\wt{\KK}:\LL]$, we conclude that 
\[
\dim_\LL\wt{\KK}=[\wt{\KK}:\LL]\ge [\KK_1:\LL]\cdots[\KK_m:\LL]=\dim_\LL(\KK_1\ot_\LL\cdots\ot_\LL\KK_m)=\dim_\LL \KK_H.
\]
But then the surjective homomorphism of $\LL$-algebras $\KK_H=\KK_1\ot_\LL\cdots\ot_\LL \KK_m\to\wt{\KK}$, defined by $\alpha_1\ot\cdots\ot\alpha_m\mapsto\alpha_1\cdots\alpha_m$, must be an isomorphism, so $\KK_H$ is a field.
\end{proof}

From now on, we will assume that $G$ is a finite abelian group. We will also consider the identity component as the ground field $\FF$. The above proposition allows us to restrict ourselves to graded-fields which are graded by finite abelian $p$-groups, where $p$ is a prime number. Every such graded-field $\KK$ has the form
\begin{equation}\label{eGFp}
\KK\simeq \FF[X_1]/(X_1^{p^{k_1}}-\mu_1)\ot\cdots\ot  \FF[X_m]/(X_m^{p^{k_m}}-\mu_m),
\end{equation}
where $G=\langle a_1\rangle\times\cdots\times\langle a_m\rangle$, $o(a_i)=p^{k_i}$, and $\mu_i\in\FF^\times$ for all $i=1,\ld,m$. (This is a special case of the presentation in Proposition \ref{D_beta_mu}, with $\beta_{ij}=1$.)

In the proofs throughout this section, we will use the following criterion:

\begin{theorem}{\cite[Theorem 16 in Chapter 8]{L}}\label{tLang} 
Given a field $\FF$, a polynomial $X^n-\alpha\in\FF[X]$ is irreducible if and only if the following two conditions hold:
	\begin{enumerate}
		\item[(i)] If $p$ is a prime divisor of $n$ then $\alpha\not\in \FF^p$;
		\item[(ii)] If $4$ is a divisor of $n$ then $\alpha\not\in -4\:\FF^4$.\qed
	\end{enumerate}
\end{theorem}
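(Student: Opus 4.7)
The plan is to prove the two directions separately: necessity is handled by exhibiting explicit factorizations, while sufficiency requires reducing to the case $n = p^a$ (prime power) and then inducting on $a$, with the prime $p = 2$ requiring extra care --- this is where the $-4\,\FF^4$ obstruction earns its place.

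For necessity, suppose condition (i) fails: $\alpha = \gamma^p$ for some prime $p \mid n$, with $n = pm$. Then
\[
X^n - \gamma^p \;=\; (X^m - \gamma)\bigl(X^{m(p-1)} + \gamma X^{m(p-2)} + \cdots + \gamma^{p-1}\bigr)
\]
is a nontrivial factorization (since $m < n$). If condition (ii) fails with $4 \mid n$ and $\alpha = -4\gamma^4$, Sophie Germain's identity applied with $Y = X^{n/4}$ gives
\[
X^n + 4\gamma^4 \;=\; \bigl(Y^2 - 2\gamma Y + 2\gamma^2\bigr)\bigl(Y^2 + 2\gamma Y + 2\gamma^2\bigr),
\]
again a nontrivial factorization.

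For sufficiency, assume (i) and (ii); the first step is reduction to the prime-power case. Let $\beta \in \overline{\FF}$ be a root of $X^n - \alpha$ and factor $n = \prod_i p_i^{a_i}$. Setting $m_i \bydef n/p_i^{a_i}$, the element $\beta^{m_i}$ satisfies $X^{p_i^{a_i}} - \alpha$, and conditions (i) and (ii) transfer verbatim to this polynomial (every prime divisor of $p_i^{a_i}$ divides $n$, and $4 \mid p_i^{a_i}$ forces $p_i = 2,\ a_i \ge 2$, hence $4 \mid n$). Granting the prime-power case would yield $[\FF(\beta^{m_i}) : \FF] = p_i^{a_i}$; since $\gcd(m_1,\ldots,m_k) = 1$, Bezout gives $\beta \in \FF(\beta^{m_1}, \ldots, \beta^{m_k})$, and pairwise coprimality of the degrees $p_i^{a_i}$ forces $[\FF(\beta) : \FF] = n$, proving $X^n - \alpha$ irreducible.

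The heart of the argument --- and the main obstacle --- is therefore the prime-power case, to which I would apply induction on $a$. The base case $a = 1$ is a standard Kummer-type argument: any proper factor of $X^p - \alpha$ over $\FF$ has constant term of the form $\pm \zeta^s \beta^d$ with $\beta^p = \alpha$, $0 < d < p$, and $\zeta$ a primitive $p$-th root of unity; raising to the $p$-th power puts $\alpha^d$ in $\FF^p$, and Bezout with $\gcd(d,p) = 1$ then forces $\alpha \in \FF^p$, contradicting (i). For the inductive step, a root $\gamma$ of $X^{p^{a+1}} - \alpha$ satisfies $[\FF(\gamma^p):\FF] = p^a$ by induction, and it remains to rule out $\gamma \in \FF(\gamma^p)$ --- equivalently, to show that $\gamma^p$ is not a $p$-th power in $\FF(\gamma^p)$. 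For odd $p$ this propagation is clean via a norm computation, but for $p = 2$ the condition ``$\alpha$ becomes a square one level up'' can fail even when $\alpha \notin \FF^2$, precisely when $\alpha \in -4\,\FF^4$. I would close by applying $N_{\FF(\gamma^p)/\FF}$ to any hypothetical relation $\gamma^p = \delta^p$ with $\delta \in \FF(\gamma^p)$, translating it into a statement about $\alpha$ in $\FF$ that must contradict (i) or, in the $p = 2$ case, (ii). Making this norm argument airtight at $p = 2$ and matching it precisely to the Sophie Germain obstruction is the step I expect to consume most of the technical effort.
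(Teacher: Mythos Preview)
The paper does not give its own proof of this statement: it is quoted from Lang's \emph{Algebra} with a terminal \qed\ and used as a black box in the proofs of Theorem~\ref{th:GF} and Propositions~\ref{pGFp} and~\ref{pGF2}. So there is nothing in the paper to compare your proposal against.

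That said, your sketch is the standard argument and is essentially the one Lang gives. The necessity factorizations are correct as written. For sufficiency, your reduction to prime powers via the elements $\beta^{m_i}$ and the coprime-degree tower is clean; in the inductive step your norm idea is exactly right for odd $p$ (the minimal polynomial of $\gamma^p$ over $\FF$ is $X^{p^a}-\alpha$, so $N_{\FF(\gamma^p)/\FF}(\gamma^p)=\alpha$ and a relation $\gamma^p=\delta^p$ forces $\alpha\in\FF^p$). The only place that needs genuine care, as you note, is $p=2$: there $N(\gamma^2)=-\alpha$, so $\gamma^2=\delta^2$ only yields $\alpha\in -\FF^2$. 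One then splits on whether $-1\in\FF^2$ (where $-\FF^2=\FF^2$ and the contradiction is immediate) or not; in the latter case one must push one level further and show that $\alpha\in -\FF^2\smallsetminus\FF^2$ together with irreducibility of $X^{2^a}-\alpha$ eventually forces $\alpha\in -4\FF^4$, which is precisely what condition~(ii) excludes. Your expectation that this is where the work lives is accurate, and carrying it out would complete a correct proof.
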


In this section, the more traditional notation $(\FF^\times)^n$ will be used for the group $(\FF^\times)^{[n]}$. We start with the following necessary condition:

\begin{proposition}\label{pGFp}
Let $\KK$ be a graded-field of the form \eqref{eGFp} and let $U$ be the subgroup of $\FF^\times$ generated by $\mu_1,\ld,\mu_m$. If $\KK$ is a field then 
\[
|U(\FF^\times)^p/(\FF^\times)^p|=p^m.
\]
\end{proposition}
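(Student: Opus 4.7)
The plan is to exploit the tower of subfields $\FF = \FF_0 \subset \FF_1 \subset \cdots \subset \FF_m = \KK$, where $\FF_i \bydef \FF[X_1,\ldots,X_i]$ is the subalgebra generated by the first $i$ tensor factors, and show that Galois-style dependence among the $\mu_i$ modulo $p$-th powers in $\FF^\times$ would force reducibility at some step of the tower, contradicting Theorem~\ref{tLang}.

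First, I would verify the tower makes sense: since $\KK$ is a field, each $\FF_i$ is a field; and since $\KK$ has $\FF$-dimension $p^{k_1+\cdots+k_m}$ (it is a tensor product of quotients of polynomial rings with $1$-dimensional homogeneous components), a dimension count forces $[\FF_i:\FF_{i-1}] = p^{k_i}$ for every $i$. Consequently, $X^{p^{k_i}} - \mu_i$ is the minimal polynomial of $X_i$ over $\FF_{i-1}$, so it is irreducible. By Theorem~\ref{tLang}(i), this yields
\[
\mu_i \notin \FF_{i-1}^{\,p} \quad\text{for all } i=1,\ldots,m.
\]

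Next I would translate the target statement: the images $\overline{\mu_1},\ldots,\overline{\mu_m}$ generate $U(\FF^\times)^p/(\FF^\times)^p$, which is a quotient of $(\ZZ/p\ZZ)^m$, so the condition $|U(\FF^\times)^p/(\FF^\times)^p| = p^m$ amounts to $\FF_p$-linear independence of these images. I argue by contradiction: suppose there exist integers $e_1,\ldots,e_m$, not all divisible by $p$, and $c \in \FF^\times$ with $\mu_1^{e_1}\cdots\mu_m^{e_m} = c^p$. Let $j$ be the \emph{largest} index with $e_j \not\equiv 0 \pmod p$. Since $e_i = p f_i$ for $i > j$, absorbing those factors into $c^p$ gives
\[
\mu_j^{e_j} = \tilde{c}^{\,p}\,\prod_{i<j}\mu_i^{-e_i}, \qquad \tilde{c} \bydef c\prod_{i>j}\mu_i^{-f_i} \in \FF^\times.
\]

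Now the key observation: for every $i<j$, the element $\mu_i = X_i^{p^{k_i}} = \bigl(X_i^{p^{k_i-1}}\bigr)^{p}$ is already a $p$-th power in $\FF_{j-1}$. Therefore $\mu_j^{e_j} \in \FF_{j-1}^{\,p}$. Because $\gcd(e_j,p) = 1$, one can choose integers $s,t$ with $se_j + tp = 1$, and write
\[
\mu_j \;=\; (\mu_j^{e_j})^{s}\,(\mu_j^{t})^{p} \;\in\; \FF_{j-1}^{\,p},
\]
contradicting the irreducibility criterion established above. This forces the images of $\mu_1,\ldots,\mu_m$ in $\FF^\times/(\FF^\times)^p$ to be linearly independent over $\FF_p$, hence $|U(\FF^\times)^p/(\FF^\times)^p| = p^m$.

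The only subtle point is the dimension count justifying irreducibility at each stage: one must be sure that $\FF_i$ really has dimension $p^{k_i}$ over $\FF_{i-1}$, not less. This follows automatically because the tensor product presentation of $\KK$ in~\eqref{eGFp} has dimension exactly $p^{k_1+\cdots+k_m}$, so no collapse can happen inside the tower. Everything else reduces to a one-line application of Theorem~\ref{tLang}(i); the second clause of that theorem (the $-4\FF^4$ condition) is not needed, since we only use the forward implication ``$\mu_i \in \FF_{i-1}^{\,p} \Rightarrow$ reducible''.
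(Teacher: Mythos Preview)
Your proof is correct and follows essentially the same approach as the paper: both use the tower of intermediate fields, invoke Theorem~\ref{tLang}(i) to get $\mu_i\notin\FF_{i-1}^{\,p}$ from irreducibility, and then reach a contradiction by noting that each $\mu_i$ becomes a $p$-th power once $X_i$ is adjoined. The only cosmetic difference is that the paper packages this as an induction on $m$ (showing $\mu_m\notin U_{m-1}(\FF^\times)^p$ at each step), whereas you unroll it into a single ``largest index $j$ with $p\nmid e_j$'' contradiction.
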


\begin{proof}
We proceed by induction on $m$. Denote by $U_k$ the subgroup of $\FF^\times$ generated by $\mu_1,\ldots,\mu_k$. If $m=1$ and $\FF[X_1]/(X_1^{p^{k_1}}-\mu_1)$ is a field then $X_1^{p^{k_1}}-\mu_1$ is irreducible, so $\mu_1\not\in\FF^p$. It follows that $|U_1(\FF^\times)^p/(\FF^\times)^p|=p$.
	
Now let $m\ge 2$. We set $H=\langle a_1\rangle\times\cdots\times\langle a_{m-1}\rangle$. Since $\KK$ is a field, $\KK_H$ is also a field by Lemma \ref{lm:inverse}. By induction hypothesis, $|U_{m-1}(\FF^\times)^p/(\FF^\times)^p|=p^{m-1}$. 
	
We have $\KK\simeq\KK_H[X_m]/(X_m^{p^{k_m}}-\mu_m)$. Since $\KK$ is a field, $X_m^{p^{k_m}}-\mu_m$ is an irreducible polynomial in $\KK_H[X_m]$, so $\mu_m\not\in (\KK_H)^p$. Assume $U_m(\FF^\times)^p/(\FF^\times)^p<p^m$, i.e., $\mu_m\in U_{m-1}(\FF^\times)^p$. Since in $\KK_H$ each $\mu_i$ is the $p^{k_i}$-th power of the image $x_i$ of $X_i$, for $i=1,\ld,m-1$, it follows that $\mu_m\in(\KK_H^\times)^p$, a contradiction. 
\end{proof}

Note that the condition in Proposition \ref{pGFp} is definitely not sufficient, because even in the case of $G\simeq\ZZ_4$, we have $-4\not\in(\QQ^\times)^2$, but the graded-field $\QQ[X]/(X^4+4)$ is not a field, since $X^4+4=(X^2+2X+2)(X^2-2X+2)$.
We have sufficiency in the following particular case:
\begin{equation}\label{eGF2}
\KK\simeq \FF[X_1]/(X_1^2-\mu_1)\ot\cdots\ot \FF[X_m]/(X_m^2-\mu_m).
\end{equation}

\begin{proposition}\label{pGF2}
Let $\KK$ be a graded-field of the form \emph{(\ref{eGF2})} with $\chr\FF\ne 2$, and let $U$ be the subgroup of $\FF^\times$ generated by $\mu_1,\ld,\mu_m$. Then $\KK$ is a field if and only if 
\[
|U(\FF^\times)^2/(\FF^\times)^2|=2^m.
\]
\end{proposition}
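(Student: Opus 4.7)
The ``only if'' direction is already handled by Proposition \ref{pGFp} with $p=2$: if $\KK$ is a field, then $|U(\FF^\times)^2/(\FF^\times)^2|=2^m$.

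For the ``if'' direction, I would argue by induction on $m$. The base case $m=1$ is immediate from Theorem \ref{tLang}: since $n=2$, only condition (i) applies, so $X_1^2-\mu_1$ is irreducible precisely when $\mu_1\notin(\FF^\times)^2$, which is exactly the hypothesis $|\langle\mu_1\rangle(\FF^\times)^2/(\FF^\times)^2|=2$.

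For the inductive step, assume the result holds for $m-1$ generators, and suppose $\mu_1,\ldots,\mu_m$ have independent images in $\FF^\times/(\FF^\times)^2$. Set
\[
\LL\bydef\FF[X_1,\ldots,X_{m-1}]/(X_1^2-\mu_1,\ldots,X_{m-1}^2-\mu_{m-1}).
\]
By the induction hypothesis, $\LL$ is a field of degree $2^{m-1}$ over $\FF$. Since each adjoined $\sqrt{\mu_i}$ has a conjugate $-\sqrt{\mu_i}$ in $\LL$, the extension $\LL/\FF$ is Galois with group $\Gamma\simeq(\ZZ/2\ZZ)^{m-1}$, generated by the involutions $\sigma_i:\sqrt{\mu_i}\mapsto-\sqrt{\mu_i}$ fixing the other generators. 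Since $\KK\simeq\LL[X_m]/(X_m^2-\mu_m)$, it suffices to show that $\mu_m\notin(\LL^\times)^2$, i.e., to obtain a contradiction from the assumption $\mu_m=\ell^2$ for some $\ell\in\LL^\times$.

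Under such an assumption, every $\sigma\in\Gamma$ satisfies $\sigma(\ell)^2=\sigma(\mu_m)=\mu_m=\ell^2$, so $\sigma(\ell)=\pm\ell$; hence the stabilizer $H=\{\sigma\in\Gamma\mid\sigma(\ell)=\ell\}$ has index $1$ or $2$ in $\Gamma$. If $H=\Gamma$, then $\ell\in\FF$ and $\mu_m\in(\FF^\times)^2$, contradicting independence. Otherwise, the fixed field $\LL^H$ is a quadratic extension of $\FF$ inside $\LL$. The key structural observation I would use is that the elements $w_\epsilon\bydef\prod_{i=1}^{m-1}(\sqrt{\mu_i})^{\epsilon_i}$, for $\epsilon\in\{0,1\}^{m-1}$, form an $\FF$-basis of $\LL$ consisting of joint eigenvectors for $\Gamma$ (with $\sigma_i(w_\epsilon)=(-1)^{\epsilon_i}w_\epsilon$), so every quadratic subfield of $\LL$ has the form $\FF(w_\epsilon)$ for some nonzero $\epsilon$, with $w_\epsilon^2=\prod\mu_i^{\epsilon_i}$. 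Writing $\ell=a+bw_\epsilon$ with $a,b\in\FF$ and $b\ne 0$, squaring gives
\[
\mu_m=\ell^2=a^2+b^2\textstyle\prod_i\mu_i^{\epsilon_i}+2ab\,w_\epsilon;
\]
since $\mu_m\in\FF$ and $\chr\FF\ne 2$, the coefficient $2ab$ of $w_\epsilon$ must vanish, forcing $a=0$ and $\mu_m=b^2\prod_i\mu_i^{\epsilon_i}$. But then $\mu_m\prod_i\mu_i^{\epsilon_i}\in(\FF^\times)^2$ is a nontrivial relation among $\mu_1,\ldots,\mu_m$ modulo $(\FF^\times)^2$, contradicting the hypothesis.

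The main obstacle is the classification of quadratic subfields of $\LL$; once this is in hand, the rest is a short computation using $\chr\FF\ne 2$. This step is essentially Kummer-theoretic and is where the hypothesis $\chr\FF\ne 2$ is crucial, both to guarantee that $\LL/\FF$ is separable Galois with elementary abelian $2$-group and to cancel the cross term $2ab\,w_\epsilon$.
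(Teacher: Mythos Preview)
Your proof is correct. The ``only if'' direction via Proposition~\ref{pGFp} matches the paper in spirit, and your inductive argument for the ``if'' direction is sound: the classification of quadratic subfields of $\LL$ via the eigenbasis $\{w_\epsilon\}$ is exactly the Galois correspondence for the elementary abelian group $\Gamma$, and the final square computation correctly uses $\chr\FF\ne 2$ to kill the cross term and produce the forbidden relation.

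The paper takes a different, shorter route. Rather than inducting on $m$, it embeds each $\KK_i=\FF[X_i]/(X_i^2-\mu_i)$ in $\overline{\FF}$, forms the composite $\wt{\KK}=\KK_1\cdots\KK_m$, and observes that $\KK$ is a field iff the surjection $\KK\to\wt{\KK}$ is an isomorphism iff $[\wt{\KK}:\FF]=2^m$. Since $\wt{\KK}/\FF$ is a Kummer extension of exponent $2$, Theorem~\ref{tKE1} gives $[\wt{\KK}:\FF]=|U(\FF^\times)^2/(\FF^\times)^2|$ directly. So the paper treats both directions simultaneously by a single dimension count, leaning on Kummer theory as a black box. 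Your argument is more self-contained---you are essentially reproving the relevant fragment of Kummer theory (the description of quadratic subextensions of $\LL$) by hand---which makes it longer but avoids the forward reference to Theorem~\ref{tKE1}. Both approaches ultimately rest on the same Galois-theoretic content; the paper packages it, you unpack it.
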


\begin{proof} 
Since $\mu_i\notin\FF^2$, each $\KK_i\bydef\FF[X_i]/(X_i^2-\mu_i)$ is a field. Let us embed them over $\FF$ in $\overline{\FF}$ and let $\wt{\KK}$ be the composite $\KK_1\cdots\KK_m$ in $\overline\FF$. For $\KK$ to be a field, it is necessary and sufficient that $[\wt{\KK}:\FF]=2^m$ (see the proof of Proposition \ref{pSylow}). But $\wt{\KK}/\FF$ is a Kummer extension of exponent $2$, so $[\wt{\KK}:\FF]=|\Lambda/(\FF^\times)^2|$ where $\Lambda=U(\FF^\times)^2$ (see Theorem \ref{tKE1}, below). The result follows.
\end{proof}

\begin{example}\leavevmode
\begin{enumerate}
\item[{(i)}] Over $\QQ$, there are $\ZZ_2^m$-graded field extensions for any $m=1,2,\ld$;
\item[{(ii)}] Over the field $\QQ_p$ of $p$-adic numbers, $p\ne 2$, $\ZZ_2^m$-graded field extensions exist only for $m=1,2$;
\item[{(iii)}] Over the field $\QQ_2$ of $2$-adic numbers, $\ZZ_2^m$-graded field extensions exist only for $m=1,2,3$.
\end{enumerate}
\end{example}

\begin{proof}
In case (i), the prime numbers $p_1=2, p_2=3,\ld$ are known to be independent mod $(\QQ^\times)^2$. Hence the field $\KK=\QQ(\sqrt{p_1},\sqrt{p_2},\ld,\sqrt{p_m})$ is graded by $\ZZ_2^m$. The grading is given by assigning to each element $p_1^{k_1/2}\cdots p_m^{k_m/2}$, $k_i\in\{0,1\}$, the degree $(\overline{k_1},\ld,\overline{k_m})\in\ZZ_2^m$. (These elements form a graded basis of $\KK$ over $\QQ$.)
	
Claims (ii) and (iii) are well-known facts about $\QQ_p^\times/(\QQ_p^\times)^2$.
\end{proof}

\subsection{Finite fields}\label{ssff}

Let $\KK$ be a finite field of characteristic $p$. Suppose $\KK$ is given a grading with support $G$ and let $\FF=\KK_e$. By Proposition \ref{psgf1n}, $G$ must be a cyclic group, and its order is $k=[\KK:\FF]$. Let $|\FF|=p^\ell$, so $|\KK|=p^{k\ell}$. Given $k$, we want to determine what $p$ and $\ell$ can appear, i.e., to characterize the extensions of finite fields that can be made $\ZZ_k$-graded extensions. By Proposition \ref{psgf1n}, a necessary condition is that $k$ be a divisor of $\dfrac{p^{k\ell}-1}{p^\ell-1}$. From the next result, it is clear that this condition is not sufficient (take e.g. $p=3$, $\ell=1$ and $k=4$).

\begin{theorem}\label{th:GF} 
Let $\KK=GF(p^n)$ and $\FF=GF(p^\ell)$ be Galois fields where $\ell$ divides $n$. Let $k=n/\ell$. Then there exists a $G$-grading on $\KK$ with support $G$ and the identity component $\FF$ if and only if the following two conditions hold:
	\begin{enumerate}
		\item[(i)] If $q$ is a prime divisor of $k$ then $q$ divides $p^{\ell}-1$;
		\item[(ii)] If $4$ divides $k$ then $4$ divides $p^{\ell}-1$.
	\end{enumerate}
If such a grading exists, the group $G$ must be cyclic of order $k$.
\end{theorem}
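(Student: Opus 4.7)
The key is to translate the existence of such a grading into the irreducibility of a polynomial of the form $X^k-\mu$ over $\FF$, and then invoke Theorem~\ref{tLang}.

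First, by Proposition~\ref{psgf1n}, $G$ is cyclic. Moreover, in any graded-field extension each homogeneous component is $1$-dimensional over $\FF$ (if $0\ne x\in\KK_g$, then $x^{-1}\in\KK_{g^{-1}}$ and $\KK_g\cdot x^{-1}\subset\FF$), so $|G|=\dim_\FF\KK=k$. Fix a generator $a$ of $G$, pick $0\ne x\in\KK_a$, and let $\mu\bydef x^k\in\FF^\times$. Then $1,x,\ldots,x^{k-1}$ form a graded $\FF$-basis of $\KK$, so $\KK\simeq\FF[X]/(X^k-\mu)$ as graded algebras. Conversely, any $\mu\in\FF^\times$ for which $X^k-\mu$ is irreducible yields a $\ZZ_k$-graded field extension of degree $k$, which must be isomorphic to $GF(p^n)$. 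So the existence of a grading is equivalent to the existence of $\mu\in\FF^\times$ such that $X^k-\mu$ is irreducible, to which I then apply Theorem~\ref{tLang}.

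Throughout, $\FF^\times$ is cyclic of order $m\bydef p^\ell-1$; fix a generator $\alpha$. For necessity of (i): if some $\mu\in\FF^\times$ satisfies $\mu\notin(\FF^\times)^q$ for every prime $q\mid k$, then $(\FF^\times)^q\ne\FF^\times$, forcing $q\mid m$. For necessity of (ii): assume $4\mid k$, so in particular $2\mid m$ by (i) and thus $p$ is odd. If $4\nmid m$, then $(\FF^\times)^4=(\FF^\times)^2$ and $-1\notin(\FF^\times)^2$ (the unique element of order $2$ in a cyclic group is a square exactly when the $2$-primary part has order $\ge 4$). Hence $-4(\FF^\times)^4=-(\FF^\times)^2=\FF^\times\setminus(\FF^\times)^2$, so the condition $\mu\notin(\FF^\times)^2$ coming from $2\mid k$ would force $\mu\in -4(\FF^\times)^4$, contradicting condition (ii) of Theorem~\ref{tLang}. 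Therefore $4\mid m$.

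For sufficiency, assume conditions (i) and (ii) hold and take $\mu=\alpha$. For $q\mid m$, $(\FF^\times)^q$ consists of the $\alpha^j$ with $q\mid j$, so $\alpha=\alpha^1\notin(\FF^\times)^q$ for every prime $q\mid k$. If in addition $4\mid k$, then $4\mid m$ by (ii); as $p$ is odd, $-1=\alpha^{m/2}$ and $2=\alpha^u$ for some integer $u$, so $-4=\alpha^{m/2+2u}$, whose exponent is even because $4\mid m$. Since $(\FF^\times)^4$ consists of the $\alpha^j$ with $4\mid j$, the coset $-4(\FF^\times)^4$ consists of the $\alpha^j$ with $j\equiv m/2+2u\pmod{4}$, an even residue class; but $j=1$ is odd, so $\alpha\notin -4(\FF^\times)^4$. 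By Theorem~\ref{tLang}, $X^k-\alpha$ is irreducible, completing the proof.

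The step I expect to require the most care is the analysis of condition (ii), both for necessity and sufficiency, since it hinges on tracking the parities of exponents relative to the factorization $-4=(-1)\cdot 2^2$ inside the cyclic group $\FF^\times$.
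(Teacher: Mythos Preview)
Your proof is correct and follows essentially the same route as the paper: reduce the problem to the existence of $\mu\in\FF^\times$ with $X^k-\mu$ irreducible and then invoke Theorem~\ref{tLang}; your necessity argument for (i) and (ii) is virtually identical to the paper's. The only notable difference is in the sufficiency step: you pick the explicit generator $\mu=\alpha$ and verify $\alpha\notin -4(\FF^\times)^4$ by an exponent-parity computation, whereas the paper chooses $\mu$ via the observation that a finite cyclic group is not the union of its proper subgroups, and then disposes of the $-4\FF^4$ condition more quickly by noting that when $4\mid m$ one has $-1\in(\FF^\times)^2$, hence $-4(\FF^\times)^4\subset(\FF^\times)^2$, so the already-secured condition $\mu\notin(\FF^\times)^2$ suffices. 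Your version is more explicit; theirs is slightly slicker.
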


\begin{proof} 
The assertion about $G$ is already proved, so $G=\langle g\rangle$ where $o(g)=k$. Then $\KK$ must be isomorphic to $\FF[X]/(X^k-\mu)$ as a graded $\FF$-algebra, where $\mu\in\FF^\times$ and the grading is defined by setting $\deg X\bydef g$.  
Any value of $\mu$ yields a graded-field, and if it happens to be a field, then it must be isomorphic to $\KK$. Therefore, our question is to determine whether or not there exists $\mu\in\FF^\times$ such that $X^k-\mu$ is an irreducible polynomial in $\FF[X]$. 
	
According to Theorem \ref{tLang}, this happens if and only if $\mu\not\in\FF^q$, for any prime divisor $q$ of $k$, and also $\mu\not\in -4\FF^4$ if $4$ divides $k$. The multiplicative group $\FF^\times$ is cyclic of order $m=p^\ell-1$. If some prime divisor $q$ of $k$ is not a divisor of $m$ then $(\FF^\times)^q=\FF^\times$ and so the polynomial $X^k-\mu$ is never irreducible. If $4\mid k$ then $2$ is a prime divisor of $k$, so $m$ must be even and hence $p$ must be odd. Note that $4$ divides $m$ if and only if either $p\equiv 1\pmod{4}$, or $p\equiv 3\pmod{4}$ and $\ell$ is even. Assume that $4$ is not a divisor of $m$, i.e., $p\equiv 3\pmod{4}$ and $\ell$ is odd. Then $(\FF^\times)^4=(\FF^\times)^2$. Hence $-4(\FF^\times)^4=-4(\FF^\times)^2=-(\FF^\times)^2$. At the same time, $-1$ is not a square in $\FF$, so $\FF^\times=-(\FF^\times)^2\cup(\FF^\times)^2=(-4(\FF^\times)^4)\cup(\FF^\times)^2$. Therefore, $X^k-\mu$ is never irreducible in this case. We have proved the necessity of our conditions (i) and (ii).

To prove sufficiency, assume (i) and (ii) are satisfied. Then there exists $\mu\in\FF^\times$ such that $\mu\not\in\FF^q$ for any prime divisor $q$ of $k$. Indeed, by (i), every such $q$ is a divisor of $m$, so in the cyclic group $\FF^\times$ of order $m$ each $(\FF^\times)^q$ is a proper subgroup, and a finite cyclic group is not the union of its proper subgroups. If $4\nmid k$ then $X^k-\mu$ is irreducible for any $\mu$ as above. Now, if $4\mid k$ then $4\mid m$ by (ii), so either $p\equiv 1\pmod{4}$, or $p\equiv 3\pmod{4}$ and $\ell$ is even. It follows that $-1$ is a square in $\FF$, so $-4\FF^4\subset \FF^2$. Since $2$ is among the prime divisors of $k$, any $\mu$ as above satisfies $\mu\not\in -4\FF^4$ and, therefore, $X^k-\mu$ is irreducible.
\end{proof}

\begin{remark}
As we have seen in Section \ref{se:1d}, the isomorphism classes of graded-fields with support $\ZZ_k$ and identity component $\FF$ are parametrized by the cosets $\mu(\FF^\times)^k$ in $\FF^\times/(\FF^\times)^k$. The above proof shows that, in the case $\FF=GF(p^\ell)$, none of these graded-fields is a field unless conditions (i) and (ii) are satisfied, and if they are satisfied then the graded-field determined by $\mu(\FF^\times)^k$ is a field if and only if $\mu\notin(\FF^\times)^q$ or, equivalently, $\mu^{m/q}\ne 1$ for all prime divisors $q$ of $k$, where $m=p^\ell-1$. 
\end{remark}

\begin{example}\label{ex1} 
Let $\KK=GF(2^{q^\alpha})$ where $q$ is a prime number. Then any grading on $\KK$ is trivial.
\end{example}

\begin{proof} 
Using the notation of Theorem \ref{th:GF}, we must have $k=q^\beta$ and $\ell=q^{\alpha-\beta}$ for some $0\le\beta\le\alpha$. We claim that, for any $\beta\ne 0$, condition (i) is not satisfied. Indeed, $q$ is a prime divisor of $k$, but not of $m=2^\ell-1$, because $2^q\equiv 2\pmod{q}$ and hence $2^\ell\equiv 2\not\equiv 1\pmod{q}$.
\end{proof}

\begin{example}\label{ex3} 
Let $\KK=GF(p^{q\ell})$ where $p$ and $q$ are prime numbers and $q$ divides $p^\ell-1$. Then $\KK$ admits a $\ZZ_{q}$-grading with identity component $GF(p^\ell)$.
\end{example}

\begin{proof}
The conditions of Theorem \ref{th:GF} are satisfied, so the grading exists. Let us construct it explicitly. Let $\FF=GF(p^\ell)$ and consider the automorphism $\psi=\vp^\ell$ of $\KK$ where $\vp$ is the Frobenius automorphism: $\vp(x)=x^p$. The order of $\psi$ is $q$ and the set of its fixed points equals $\FF$. Since $q$ divides $p^\ell-1$, there is a subgroup of order $q$ in $\FF^\times$, i.e., $\FF$ contains a primitive $q$-th root of unity, say, $\zeta$ (so $\KK/\FF$ is a Kummer extension, see below). It follows that we have a $\ZZ_q$-grading $\KK=\bigoplus_{i\in\ZZ_q}\KK_i$ whose homogeneous components are the eigenspaces of $\psi$: $\KK_i=\{x\mid x^{p^\ell}=\zeta^i x\}$, for all $i\in\ZZ_q$.
\end{proof}

\subsection{Kummer extensions}\label{ssKE}

A finite Galois extension $\KK/\FF$ is called a \textit{Kummer extension of exponent $n$} if the Galois group $\Gamma =\Gal(\KK/\FF)$ is abelian of exponent dividing $n$ and $\FF$ contains a primitive $n$-th root of unity. The classification of such extensions is as follows. 

\begin{theorem}{\cite[Theorems 13 and 14 in Chapter 8]{L}}\label{tKE1} 
Let $\FF$ be a field containing a primitive $n$-th root of unity. Then we have the following:
\begin{enumerate}
	\item Given any subgroup $\Lambda$ of $\FF^\times$ such that $(\FF^\times)^n\subset\Lambda$ and $\Lambda/(\FF^\times)^n$ is finite, we have a canonical isomorphism from $\Lambda/(\FF^\times)^n$ onto the dual group $\wh{\Gamma} = \Hom(\Gamma,\FF^\times)$ of $\Gamma = \Gal(\FF(\Lambda^{1/n})/\FF)$, given by $a(\FF^\times)^n\to\chi_a$ where, for any $a\in\Lambda$, $\chi_a(\sigma)\bydef\sigma(\alpha)/\alpha$, where $\alpha$ is an element of $\FF(\Lambda^{1/n})$ such that $\alpha^n = a$. In particular, we have $[\FF(\Lambda^{1/n}) : \FF] = |\Lambda/(\FF^\times)^n|$.
	\item The mapping $\Lambda\mapsto \FF(\Lambda^{1/n})$ sets up an inclusion-preserving bijection from the set of subgroups $\Lambda$ of $\FF^\times$ such that $(\FF^\times)^n\subset\Lambda$ and $\Lambda/(\FF^\times)^n$ is finite, onto the set of isomorphism classes of Kummer extensions of $\FF$ of exponent $n$.\qed
\end{enumerate}
\end{theorem}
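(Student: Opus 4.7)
This is a classical result of Kummer theory, and the plan is to extract it from the standard bilinear pairing argument. Fix $\Lambda$ with $(\FF^\times)^n\subset \Lambda$ and $\Lambda/(\FF^\times)^n$ finite, set $\KK=\FF(\Lambda^{1/n})$, and let $\Gamma=\Gal(\KK/\FF)$. Because $\FF$ contains a primitive $n$-th root of unity $\zeta$, each $X^n-a$ with $a\in\Lambda$ splits completely over $\KK$, so $\KK/\FF$ is Galois. For $a\in\Lambda$, fix any $\alpha\in\KK$ with $\alpha^n=a$ and set $\chi_a(\sigma)\bydef \sigma(\alpha)/\alpha$. The first step is to check that $\chi_a(\sigma)\in\mu_n\subset\FF^\times$ (because $\sigma(\alpha)^n=a=\alpha^n$), that $\chi_a$ does not depend on the choice of $\alpha$ (any two differ by a factor of $\mu_n\subset\FF^\times$, which is killed by $\sigma$), and that $a\mapsto\chi_a$ is a homomorphism $\Lambda\to\Hom(\Gamma,\FF^\times)=\wh{\Gamma}$ that is trivial precisely on $(\FF^\times)^n$ (since $\chi_a\equiv 1$ iff $\alpha$ is $\Gamma$-fixed, iff $\alpha\in\FF$). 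This yields an injection
\[
\Lambda/(\FF^\times)^n\hookrightarrow \wh{\Gamma}.
\]

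The core of (1) is then to show this injection is surjective, i.e., $|\Lambda/(\FF^\times)^n|=|\Gamma|=[\KK:\FF]$. The inequality $|\Lambda/(\FF^\times)^n|\le [\KK:\FF]$ is immediate from the injection. The reverse inequality is the main obstacle. The plan is to prove non-degeneracy of the pairing $\Lambda/(\FF^\times)^n\times\Gamma\to\mu_n$, $(a,\sigma)\mapsto\chi_a(\sigma)$, on the other side as well: if $\sigma\in\Gamma$ satisfies $\chi_a(\sigma)=1$ for all $a\in\Lambda$, then $\sigma$ fixes every $a^{1/n}$, hence fixes $\KK$, hence $\sigma=\id$. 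This gives an injection $\Gamma\hookrightarrow\wh{\Lambda/(\FF^\times)^n}$ and forces $|\Gamma|\le|\Lambda/(\FF^\times)^n|$, closing the circle.

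For (2), injectivity of $\Lambda\mapsto\FF(\Lambda^{1/n})$ uses (1) as follows. If $\FF(\Lambda^{1/n})=\FF(\Lambda'^{1/n})=\KK$, replace $\Lambda'$ by $\Lambda\Lambda'$ to reduce to $\Lambda\subset\Lambda'$; then (1) gives $|\Lambda/(\FF^\times)^n|=[\KK:\FF]=|\Lambda'/(\FF^\times)^n|$, whence $\Lambda=\Lambda'$. For surjectivity, given a Kummer extension $\KK/\FF$ of exponent $n$, define
\[
\Lambda\bydef\{a\in\FF^\times\mid a\in(\KK^\times)^n\}
\]
and set $\KK_0\bydef\FF(\Lambda^{1/n})\subset\KK$. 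The plan is to prove $\KK_0=\KK$ via the eigenspace decomposition under $\Gamma=\Gal(\KK/\FF)$. Since $\Gamma$ is finite abelian of exponent dividing $n$ and $\FF$ contains all $n$-th roots of unity, $|\Gamma|$ is invertible in $\FF$ and all characters of $\Gamma$ take values in $\FF^\times$, so $\FF[\Gamma]$ acts semisimply on $\KK$ and splits it as $\KK=\bigoplus_{\chi\in\wh{\Gamma}}\KK_\chi$. A short argument shows each $\KK_\chi$ is $1$-dimensional over $\FF$: nonzero $x,y\in\KK_\chi$ give $y/x\in\KK^\Gamma=\FF$. Picking $0\ne x_\chi\in\KK_\chi$, one has $x_\chi^n\in\KK_{\chi^n}=\KK_e=\FF$ (because $\chi^n=1$), so $x_\chi^n\in\Lambda$ and $x_\chi\in\KK_0$. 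Therefore $\KK=\sum_\chi\FF x_\chi\subset\KK_0$, finishing (2).

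The main obstacle is the non-degeneracy of the Kummer pairing on the $\Gamma$-side in (1)—equivalently, the upper bound $[\KK:\FF]\le|\Lambda/(\FF^\times)^n|$—which in spirit is the same statement as the surjectivity in (2); both rest on the existence of enough $\FF$-valued characters, which is precisely the hypothesis that $\FF$ contains the relevant roots of unity. Inclusion preservation in (2) is automatic from the construction.
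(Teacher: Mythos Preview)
The paper does not provide its own proof of this theorem: it is stated with a \qed and attributed to \cite[Theorems 13 and 14 in Chapter 8]{L}, so there is nothing in the paper to compare against. Your proof plan is the standard Kummer-pairing argument and is correct as written; the only point worth making explicit is that $\KK/\FF$ is \emph{finite} (choose coset representatives $a_1,\ldots,a_m$ of $\Lambda/(\FF^\times)^n$ and note $\KK=\FF(a_1^{1/n},\ldots,a_m^{1/n})$), which you use implicitly when invoking $|\wh{\Gamma}|=|\Gamma|$.
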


\begin{corollary}\label{tKE2}
Every Kummer extension is a graded extension in a canonical way. More precisely, let $\KK/\FF$ be a Kummer extension of exponent $n$, so $\KK=\FF(\Lambda^{1/n})$ and $\Gamma=\Gal(\FF(\Lambda^{1/n})/\FF)$ as in Theorem \ref{tKE1}. Then $\KK/\FF$ is a $G$-graded extension, with $G=\Lambda/(\FF^\times)^n\simeq\wh{\Gamma}$, as follows: the homogeneous component labeled by the coset represented by $a\in\Lambda$, is given by 
\begin{equation}\label{eq:grad_Kummer}
\KK_{a(\FF^\times)^n}=\FF\alpha\quad\text{where }\alpha^n=a
\end{equation}
and does not depend on the choice of the representative $a$ and $\alpha\in\KK$ such that $\alpha^n=a$.  
\end{corollary}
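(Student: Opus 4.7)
\medskip

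My plan is to verify in turn that the putative decomposition is well-defined, that it respects the multiplicative structure, and finally that it is a direct sum decomposition of $\KK$.

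First I would check well-definedness of $\KK_{a(\FF^\times)^n}=\FF\alpha$. If $\alpha,\alpha'\in\KK$ both satisfy $\alpha^n=\alpha'^n=a$, then $(\alpha'\alpha^{-1})^n=1$, so $\alpha'\alpha^{-1}$ is an $n$-th root of unity in $\KK$; by hypothesis $\FF$ contains all of these, so $\alpha'\in\FF\alpha$. Replacing $a$ by $ab^n$ with $b\in\FF^\times$, any $\alpha''$ with $\alpha''^n=ab^n$ satisfies $(\alpha''(\alpha b)^{-1})^n=1$, hence $\alpha''\in\FF\alpha b=\FF\alpha$. So the line $\FF\alpha$ depends only on the coset $a(\FF^\times)^n\in G$. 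The inclusions $\KK_g\KK_h\subset\KK_{gh}$ follow because $\alpha^n=a,\beta^n=b$ gives $(\alpha\beta)^n=ab$. Taking $a\in(\FF^\times)^n$, say $a=b^n$, we get $\alpha\in\FF b$, whence $\KK_{e}=\FF$; in particular every $\KK_g$ is a nonzero one-dimensional $\FF$-subspace.

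Next I would prove that $\KK=\bigoplus_{g\in G}\KK_g$ using the character isomorphism from Theorem \ref{tKE1}. The finite abelian group $\Gamma=\Gal(\KK/\FF)$ acts $\FF$-linearly on $\KK$, and since $\FF$ contains a primitive $n$-th root of unity and the exponent of $\Gamma$ divides $n$, every character of $\Gamma$ takes values in $\FF^\times$, so $\KK$ decomposes as $\KK=\bigoplus_{\chi\in\wh{\Gamma}}\KK^\chi$ into isotypic components (Dedekind's linear independence of characters, equivalently the semisimple decomposition of the finite abelian group algebra acting on an $\FF$-vector space). For $a\in\Lambda$ and $\alpha\in\KK$ with $\alpha^n=a$, the defining formula $\chi_a(\sigma)=\sigma(\alpha)/\alpha$ says precisely that $\FF\alpha\subset\KK^{\chi_a}$, and by Theorem \ref{tKE1} the map $a(\FF^\times)^n\mapsto\chi_a$ is a bijection $G\xrightarrow{\sim}\wh{\Gamma}$. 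Therefore
\[
\sum_{g\in G}\KK_g\;\subset\;\bigoplus_{\chi\in\wh{\Gamma}}\KK^\chi\;=\;\KK,
\]
the left-hand sum is automatically direct (distinct isotypic components), and comparing dimensions
\[
\sum_{g\in G}\dim_\FF\KK_g\;=\;|G|\;=\;|\Lambda/(\FF^\times)^n|\;=\;[\KK:\FF]\;=\;\dim_\FF\KK
\]
forces equality, with each $\KK_g=\KK^{\chi_a}$ one-dimensional. Together with the computations in the previous paragraph, this establishes that \eqref{eq:grad_Kummer} defines a $G$-graded field extension structure on $\KK/\FF$.

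The only conceptual step is the decomposition of $\KK$ into $\Gamma$-isotypic lines; everything else is bookkeeping. I do not expect any obstacle: the Kummer hypothesis (roots of unity in $\FF$) is exactly what is needed to diagonalize the $\Gamma$-action over $\FF$, and Theorem \ref{tKE1} packages the match between cosets in $\Lambda/(\FF^\times)^n$ and characters of $\Gamma$.
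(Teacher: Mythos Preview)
Your proof is correct and follows essentially the same approach as the paper: both diagonalize the $\Gamma$-action on $\KK$ (possible because $\FF$ contains the needed roots of unity) and identify the eigenspaces with the cosets in $\Lambda/(\FF^\times)^n$ via the isomorphism $a(\FF^\times)^n\mapsto\chi_a$ from Theorem~\ref{tKE1}. You simply make the bookkeeping (well-definedness of $\FF\alpha$, the multiplicative property, and the dimension count) more explicit than the paper does.
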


\begin{proof}
Since $\FF$ contains enough roots of unity, the $\Gamma$-action on $\KK$ diagonalizes, and the eigenspace decomposition is a grading by $\wh{\Gamma}$. The identity component of this grading is the set of fixed points of $\Gamma$, i.e., $\FF$, which implies that $\KK/\FF$ is a $\wh{\Gamma}$-graded extension.

For each character $\chi\in\wh{\Gamma}$, there is a unique coset $a(\FF^\times)^n\in\Lambda/(\FF^\times)^n$ such that $\chi=\chi_a$, i.e., $\chi_a(\sigma)=\sigma(\alpha)/\alpha$ where $\alpha\in\KK$ is any element satisfying $\alpha^n=a$. An element $x\in\KK$ is homogeneous of degree $\chi_a$ if and only if, for any $\sigma\in\Gamma$, we have 
\[
\sigma(x)=\chi_a(\sigma)x=(\sigma(\alpha)/\alpha)x.
\]
Now, $\alpha$ itself satisfies the above equation. Since the components of the grading are $1$-dimensional, $\KK_{a(\FF^\times)^n}=\KK_{\chi_a}=\FF\alpha$, as claimed.
\end{proof}

\begin{example}\label{ex2} 
Let $\KK=\QQ(\zeta_n)$ be the $n$-th cyclotomic extension of $\QQ$. The Galois group of $\Gamma=\Gal(\KK/\QQ)$ is isomorphic to $\ZZ_n^\times$. If $n=2^{k}p_1^{k_1}\cdots p_m^{k_m}$ where $2<p_1<\ldots<p_m$ are prime, $k\ge 0$ and $k_i>0$, then $\Gamma$ is the direct product of either $m$ (if $k\le 1$), $m+1$ (if $k=2$), or $m+2$ (if $k\ge 3$) cyclic subgroups of even order. Thus, $\Gamma_{[2]}$ is either $\ZZ_2^m$, $\ZZ_2^{m+1}$ or $\ZZ_2^{m+2}$. Since abelian groups of exponent $2$ are isomorphic to the groups of their rational characters, $\KK$ admits a grading by $\ZZ_2^m$, $\ZZ_2^{m+1}$ or $\ZZ_2^{m+2}$ (as indicated above). The identity component of the grading is the set of fixed points under the action of $\Gamma_{[2]}$. The only cases where this group is the whole $\Gamma$ are $n=1,2,3,4,6,8,12,24$. In these cases $\KK/\QQ$ is a graded field extension, and the grading is given by Equation \eqref{eq:grad_Kummer} above. 

For example, $\KK=\QQ(\zeta_8)$ can be written as $\KK=\QQ(\sqrt{2},\bi)=\QQ(\Lambda^{1/2})$ where $\Lambda=\langle 2,-1\rangle(\QQ^\times)^2$. We have $\Gamma=\langle\sigma\rangle\times\langle\tau\rangle$ where $\sigma(\sqrt{2})=-\sqrt{2}$, $\sigma(\bi)=\bi$, and $\tau$ is the complex conjugation restricted to $\KK$. The dual group $\wh{\Gamma}$ is generated by $\chi_2$ and $\chi_{-1}$, which are given by $\chi_2(\sigma)=-1$, $\chi_2(\tau)=1$, and $\chi_{-1}(\sigma)=1$, $\chi_{-1}(\tau)=-1$. The canonical $\wh{\Gamma}$-grading is the following ($\wh{\Gamma}\simeq\ZZ_2^2$):
\begin{equation}\label{eq:can_gr_z8}
\QQ(\zeta_8)=\QQ\oplus\QQ\sqrt{2}\oplus\QQ\bi\oplus\QQ\sqrt{2}\,\bi.
\end{equation}
\end{example}

The converse of Corollary \ref{tKE2} is false. Indeed, we have already seen that a graded field extension need not be Galois. The following is a partial converse:

\begin{proposition}\label{tKE3}
Let $\KK/\FF$ be a $G$-graded finite field extension. Assume that $\FF$ contains a primitive root of unity of degree $n=\exp(G)$. Then $\KK/\FF$ is a Kummer extension with $\Gal(\KK/\FF)\simeq\wh{G}$.
\end{proposition}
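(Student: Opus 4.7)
The plan is to construct an action of $\wh{G}=\Hom(G,\FF^\times)$ on $\KK$ by $\FF$-algebra automorphisms, directly from the grading, and then to recognize this action as the full Galois action. First I would observe that $G$ must be abelian: because $\KK$ is commutative and its components are $1$-dimensional, for any nonzero $x\in\KK_g$ and $y\in\KK_h$ we have $xy=yx\in\KK_{gh}\cap\KK_{hg}$, forcing $gh=hg$ since $G=\Supp\KK$. Next, for each character $\chi\in\wh{G}$, define $\sigma_\chi:\KK\to\KK$ by $\sigma_\chi(x)\bydef\chi(g)x$ for $x\in\KK_g$, extended $\FF$-linearly. Since by hypothesis $\FF$ contains a primitive $n$-th root of unity for $n=\exp(G)$, every $\chi$ takes values in $\FF^\times$, so $\sigma_\chi$ really maps $\KK$ into itself. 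The identity $\chi(gh)=\chi(g)\chi(h)$ together with $\KK_g\KK_h\subset\KK_{gh}$ shows that each $\sigma_\chi$ is an $\FF$-algebra homomorphism, and the map $\chi\mapsto\sigma_\chi$ is a group homomorphism $\wh{G}\to\Aut_\FF(\KK)$.

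Next I would check that this homomorphism is injective: if $\sigma_\chi=\id_\KK$, then $\chi(g)=1$ on a nonzero element of each $\KK_g$, hence $\chi$ is trivial on $G=\Supp\KK$. Thus $\wh{G}$ embeds into $\Aut_\FF(\KK)$. The key counting step is then
\[
|\wh{G}|=|G|=\dim_\FF\KK=[\KK:\FF],
\]
where the first equality uses that $G$ is a finite abelian group and $\FF$ contains enough roots of unity (so $\wh{G}\simeq G$), and the second uses that all homogeneous components are $1$-dimensional and $\Supp\KK=G$. Since $|\Aut_\FF(\KK)|\le[\KK:\FF]$ with equality exactly when $\KK/\FF$ is Galois, we conclude that $\KK/\FF$ is Galois and that the embedding is an isomorphism $\wh{G}\xrightarrow{\sim}\Gal(\KK/\FF)$.

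Finally, I would verify explicitly that the field of $\wh{G}$-invariants in $\KK$ equals $\FF$, both as a sanity check and to confirm the Galois correspondence: an element $\sum_g x_g$ with $x_g\in\KK_g$ is fixed by every $\sigma_\chi$ if and only if $\chi(g)x_g=x_g$ for all $\chi\in\wh{G}$ and all $g\in G$, which (using again that $\wh{G}$ separates points of $G$) forces $x_g=0$ for $g\ne e$. Since $\Gal(\KK/\FF)\simeq\wh{G}$ is abelian of exponent dividing $n$ and $\FF$ contains a primitive $n$-th root of unity, $\KK/\FF$ is by definition a Kummer extension of exponent $n$, completing the proof.

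There is no real obstacle: the only subtlety is the hypothesis on roots of unity, which is used twice, first to guarantee that the character values land inside $\FF$ so that $\sigma_\chi$ is defined over $\FF$, and second to ensure $|\wh{G}|=|G|$ so that the counting argument forces the extension to be Galois. Both uses are precisely what turns the formal grading action into a genuine Galois action.
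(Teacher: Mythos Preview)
Your proof is correct and follows essentially the same route as the paper: define the $\wh{G}$-action on $\KK$ by $\chi\cdot x=\chi(g)x$ for $x\in\KK_g$, check it is faithful because $\Supp\KK=G$, and conclude that $\KK/\FF$ is Galois with group $\wh{G}$. The only cosmetic difference is that you finish with the counting inequality $|\Aut_\FF(\KK)|\le[\KK:\FF]$, whereas the paper invokes (implicitly, via Artin) the fact that the fixed field of the faithful $\wh{G}$-action is exactly $\FF$; your explicit verification that $G$ is abelian and that the invariants are $\FF$ are details the paper takes for granted.
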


\begin{proof}
By our assumption, $G$ and $\wh{G}$ are isomorphic (non-canonically), so they have the same exponent $n$. The $G$-grading on $\KK$ yields a $\wh{G}$-action, $\wh{G}\to\Aut_\FF(\KK)$, given by $\chi\cdot x=\chi(g)x$ for all $x\in\KK_g$, $g\in G$ and $\chi\in\wh{G}$. Since the support of the grading is $G$, this action is faithful. Since $\KK_e=\FF$ and $\wh{G}$ separates points in $G$, the set of fixed points under the $\wh{G}$-action is $\FF$. Hence $\KK/\FF$ is Galois with $\Gal(\KK/\FF)\simeq\wh{G}$. 
\end{proof}

\begin{remark}
Both Corollary \ref{tKE2} and Proposition \ref{tKE3} are consequences of the fact that an action (resp., grading) by a finite group $G$ is the same as the action (resp., coaction) by the Hopf algebra $\FF G$, so it canonically corresponds to a coaction (resp., action) of the Hopf dual $(\FF G)^*=\FF^G$, and we have $\FF^G=\FF\wh{G}$ if $G$ is abelian and $\FF$ contains a primitive root of unity of degree $\exp(G)$ --- see e.g. \cite{Mont}.
\end{remark}

\begin{example}\label{ex4} 
Recall $\KK=\QQ(\zeta_8)$ from Example \ref{ex2}, with the canonical $\ZZ_2^2$-grading given by \eqref{eq:can_gr_z8}. We have $\KK\simeq \QQ[X]/(X^4+1)$, since the $8$-th cyclotomic polynomial is $X^4+1$, so $\KK/\QQ$ is a $\ZZ_4$-graded extension: 
\[
\QQ(\zeta_8)=\QQ\oplus\QQ\zeta_8\oplus\QQ\bi\oplus\QQ\zeta_8\bi.
\]
\end{example}

This shows that a Kummer extension may be a graded extension in a way different from the canonical one.

\end{document}